\documentclass[reqno,10pt]{amsart}
\usepackage{amscd,amssymb,amsmath,color,url}

\usepackage[all]{xy}

\def\onto{\twoheadrightarrow}
\def\into{\hookrightarrow}

\def\rra{\rightrightarrows}

\def\toisom{\widetilde{\to}}
\def\otisom{\widetilde{\leftarrow}}

\def\.{,\dotsc ,}
\def\:{\colon}
\def\wt{\widetilde}

\def\ol{\overline}

\def\Ob{{\rm Ob}}

\def\QCoh{{\rm QCoh}}

\def\Spec{{\rm Spec}}

\def\Frac{{\rm Frac}}
\def\bfSpec{{\bf Spec}}

\def\aff{{\rm aff}}

\def\Ker{{\rm Ker}}

\def\Id{{\rm Id}}

\def\bfZ{{\bf Z}}

\def\gtC{{\mathfrak C}}
\def\gtD{{\mathfrak D}}

\def\bbZ{{\mathbb Z}}

\def\calA{{\mathcal A}}
\def\calB{{\mathcal B}}
\def\calC{{\mathcal C}}
\def\calD{{\mathcal D}}

\def\calF{{\mathcal F}}
\def\calG{{\mathcal G}}

\def\calK{{\mathcal K}}

\def\calM{{\mathcal M}}

\def\calO{{\mathcal O}}
\def\calP{{\mathcal P}}
\def\calQ{{\mathcal Q}}

\def\calU{{\mathcal U}}

\def\calW{{\mathcal W}}

\def\oT{{\ol T}}

\def\oY{{\ol Y}}
\def\oZ{{\ol Z}}

\def\og{{\ol g}}

\def\ox{{\ol x}}

\def\tilZ{{\wt Z}}

\def\tilcalP{{\wt\calP}}

\def\alp{{\alpha}}

\def\veps{\varepsilon}

\def\R+*{{\bf R^*_+}}

\def\colim{{\rm{colim}}}

\newtheorem{theor}[subsubsection]{Theorem}
\newtheorem{prop}[subsubsection]{Proposition}
\newtheorem{lem}[subsubsection]{Lemma}
\newtheorem{cor}[subsubsection]{Corollary}

\newtheorem{question}[subsubsection]{Question}

\theoremstyle{definition}

\newtheorem{rem}[subsubsection]{Remark}

\newtheorem{exam}[subsubsection]{Example}

\def\aff{\mathfrak{Aff}}
\def\Sch{\mathfrak{Sch}}
\def\s{\mathfrak{Sp}}

\def\colim{{\rm colim}}

\def\Coeq{{\rm Coeq}}

\begin{document}

\author{Michael Temkin, Ilya Tyomkin}
\thanks{Both authors were supported by the Israel Science Foundation (grant No. 1018/11). The second author was also partially supported by the European FP7 IRG grant 248826.}
\title{Ferrand pushouts for algebraic spaces}

\address{Einstein Institute of Mathematics, The Hebrew University of Jerusalem, Giv'at Ram, Jerusalem, 91904, Israel}
\email{temkin@math.huji.ac.il}
\address{Department of Mathematics, Ben-Gurion University of the Negev, P.O.Box 653, Be'er Sheva, 84105, Israel}
\email{tyomkin@math.bgu.ac.il}
\keywords{Ferrand pushouts, algebraic spaces.}
\begin{abstract}
We extend Ferrand's results about pushouts of schemes to the category of algebraic spaces. 
\end{abstract}

\maketitle

\section{Introduction}
In \cite{Fer}, D. Ferrand studied schematic pushouts of the form $Y\coprod_TZ$, where $f\:T\to Y$ is an affine morphism and $g\:T\into Z$ is a closed immersion. When $f$ is finite such pushout is called {\em pinching} or pinching of $Z$ with respect to $f$. Although studying pinchings was, probably, Ferrand's main motivation, he realized that the ``right generality", which allows one to prove all the fundamental results, is obtained by weakening the finiteness assumption on $f$. In the current paper we study the case of algebraic spaces $Y$, $Z$, and $T$ with the same assumptions on $f$ and $g$. We call such a triple $\calP=(T;Y,Z)$ a {\em Ferrand pushout datum}. If $\calP$ admits a pushout $X$ in the category of algebraic spaces such that the morphisms $Y\to X$ and $Z\to X$ are affine then $X=\coprod\calP=Y\coprod_TZ$ is called {\em Ferrand pushout}.

\subsection{Motivation}
Let $g\:T\into Z$ be a closed immersion. If $f\:T\to Y$ is also a closed immersion, then the pinching $X=Y\coprod_TZ$ can be viewed as the scheme obtained by gluing $Y$ and $Z$ along the closed subscheme $T$.

A more interesting and less intuitive case is the case when $f$ is an affine open immersion, or, more generally, a pro-open immersion; e.g., the embedding of the generic point. In this case we call the Ferrand pushout {\em composition}, and say that $X$ is obtained by composing $Y$ and $Z$ along $T$. At first glance, gluing an open subscheme of $Y$ to a closed subscheme of $Z$ may seem unnatural. For example, such pushout is usually non-noetherian even if $Y$ and $Z$ are. Nevertheless, compositions naturally appear in the theory of valued rings and schemes over them. For example, any valuation ring of non-zero finite height is composed of valuation rings of height one, and on the geometric side this corresponds to the composition of spectra of valuation rings of height one. More generally, compositions of schemes appear in applications of valuation theory to algebraic geometry, such as the study of relative Riemann-Zariski spaces and the proof of Nagata's compactification theorem by the first author \cite[\S2.3]{temrz}. Similarly, D. Rydh uses compositions in his work \cite[\S6]{Rydh} on Nagata's compactification of certain classes of algebraic stacks.

In the sequel papers \cite{pruf, nag} we define valuation algebraic spaces, and use them to study RZ spaces in the category of algebraic spaces, obtaining, as an application, a new proof of Nagata's compactification for algebraic spaces. Composing valuation algebraic spaces plays an important technical role in these papers. Although we only use compositions in our applications, and the proofs are slightly easier when $f$ is a monomorphism, we decided to study arbitrary Ferrand pushouts in the category of algebraic spaces because the main results hold true in this generality, and the additional arguments needed for this case are not very involved.

Curiously enough, the only published proof of Nagata's compactification for algebraic spaces does not involve compositions but makes a serious use of pinchings, see \cite[Theorem~2.2.2]{CLO}. Our results, in particular, subsume that theorem.

\subsection{Main results}

\subsubsection{Properties of Ferrand pushouts}
We say that a Ferrand pushout datum $\calP$ is {\em affine} if $Y,Z$ and $T$ are so. It is easy to see that if the Ferrand pushout $X=\coprod\calP$ is a scheme (resp. affine) then $\calP$ possesses an open affine covering $\calP=\cup_i\calP_i$ (resp. $\calP$ is affine). In Theorem~\ref{lem:affpushispush} we prove that, conversely, if $\calP$ possesses an open affine covering then there exists Ferrand pushout $X=\coprod\calP$, and if $\calP$ is affine then so is $X$. A closely related Theorem~\ref{schemth} asserts that the pushouts of schemes constructed by Ferrand in \cite{Fer} are, in fact, pushouts in the category of all algebraic spaces.

The following properties of Ferrand pushout $X=Y\coprod_TZ$ are established in Theorem~\ref{cor:ferprop}: (i) the pushout is compatible with topological realizations, i.e., $|X|=|Y|\coprod_{|T|}|Z|$, (ii) $T=Y\times_XZ$, (iii) set-theoretically, $X$ is the disjoint union of its closed subspace $Y$ and open subspace $Z\setminus T$.

Claim (i) is extended to other topologies in Theorem~\ref{equivcat}. Moreover, pullback and Ferrand pushout induce equivalences of categories of flat $X$-schemes and effective flat pushout data over $\calP$, which preserve various properties of morphisms.

Finally, by Theorem~\ref{ferdesth}, certain properties of morphisms hold for $Z\to X$ if and only if they hold for $T\to Y$; and by Theorem~\ref{Sproperties}, certain properties relative to a base space hold for $X=\coprod\calP$ if and only if they hold for $\calP$.

\subsubsection{Existence of Ferrand pushouts}
We prove in Theorem~\ref{effth} that the Ferrand pushout $\coprod\calP$ exists if and only if the pushout datum $\calP=(T;Y,Z)$ admits an affine \'etale covering $\{\calP_i\to\calP\}_i$, i.e., affine \'etale coverings $\{Y_i\to Y\}_i$, $\{Z_i\to Z\}_i$, and $\{T_i\to T\}_i$ with identifications $T\times_YY_i=T_i=T\times_ZZ_i$. Whether such coverings exist is an innocent-looking problem, which seems to be rather difficult. It is closely related to the lifting problem for \'etale morphisms $T'\to T$ with respect to a closed immersion $T\into Z$, see Section~\ref{affliftsec}. We managed to solve the latter problem for ind-quasi-affine $Z$, but the general case remains open. Consequently, we prove in Theorem~\ref{cor:prescor} that an \'etale affine covering exists in the following cases: (i) $Z$ is ind-quasi-affine, (ii) the pushout is a pinching, (iii) $Z$ is decent and $|T|$ is finite and discrete. We do not know a single example of Ferrand pushout datum that admits no affine \'etale covering.

\begin{rem}
(i) Theorem~\ref{cor:prescor}(i) is the main geometric input in our approach, which, in particular, is used to prove that affine Ferrand pushouts are pushouts in the category of all algebraic spaces (surjectivity of $\psi$ in the proof of Theorem~\ref{lem:affpushispush}).

(ii) Existence of pinchings in the non-noetherian case is a new result. Artin proved in \cite[Theorem 6.1]{Artin} that pinchings always exist in the noetherian case, see also \cite[Theorem~2.2.2]{CLO} for a more complete claim. Artin's proof uses his algebraization criteria, so the noetherian assumption cannot be eliminated. Note also that pinchings (and all Ferrand pushouts) are only compatible with flat base changes, so one cannot deduce the non-noetherian case by approximating $X=Y\coprod_TZ$ by noetherian algebraic spaces. A simpler proof of existence of pinchings, based on construction of an \'etale affine covering, was given by Kollar in \cite[Theorem~38]{Kollar}. Although the noetherian assumption is made in loc.cit. too, it can be easily removed. Our proof in the case of pinchings is a variation on Kollar's one.
\end{rem}

\subsection{Plan of the paper}
The paper is written using the language of pushout data and their morphisms, see \S\ref{defsec}. Preliminary results on general pushouts and affine Ferrand pushouts are collected in Sections \ref{prelimsec}--\ref{pushsec}. Most of this material is known, in particular, we recall Ferrand's results. In Section~\ref{genpushsec}, we start by constructing pushouts for $\calP$ admitting open affine coverings, see Lemma~\ref{Zarlem0} and Theorem~\ref{lem:affpushispush}, and then extend this to the case when $\calP$ possesses only an \'etale affine covering $\{\calP_i\to\calP\}$ such that each $\calP_i\times_\calP\calP_i$ possesses an open affine covering. The latter condition is removed in Theorem~\ref{effth} since it is always satisfied, but to prove this, we must study existence of \'etale affine coverings in Section~\ref{affpressec}. Finally, Section~\ref{lastsec} is devoted to proving finer criteria for existence of pushouts, see Theorem~\ref{effth}, and studying how various properties of spaces and morphisms descend through pushouts, see Theorems~\ref{equivcat}, \ref{ferdesth} and \ref{Sproperties}.

\subsubsection*{Acknowledgments}
We are grateful to Yakov Varshavsky for helpful discussions, and to an anonymous referee for valuable remarks, and for suggesting not to restrict the generality to the case of quasi-compact and quasi-separated algebraic spaces.

\setcounter{tocdepth}{1}

\section{Preliminaries}\label{prelimsec}

\subsection{Algebraic spaces}
We adopt the definitions, notation, and conventions of \cite{stacks}. In particular, we do not assume that algebraic spaces are quasi-separated. 

\subsubsection{Coverings and presentations}
By an {\em affine covering} of an algebraic space $X$ in a given topology $\tau$ we mean a $\tau$-covering $\{U_i\to X\}$ such that all $U_i$ are affine. 
For a $\tau$-covering $\{U_i\to X\}$, set $U:=\coprod U_i$ and $R:=U\times_XU$. Then $R\rightrightarrows U$ is a $\tau$-equivalence relation, and $X=U/R$. If $U$ is a scheme then so is $R$, and we say that $R\rightrightarrows U$ is a {\em $\tau$-presentation} of $X$.

\subsubsection{Ind-quasi-affine morphisms}
Recall that a scheme $X$ is called {\em ind-quasi-affine} if every quasi-compact open of $X$ is quasi-affine. Similarly, a morphism of schemes $X\to Y$ is called {\em ind-quasi-affine} if the pullback of an open affine is ind-quasi-affine. It is shown in \cite[Tag:0AP5]{stacks} that ind-quasi-affiness is stable under base change, fpqc local on the base, and that any separated locally quasi-finite morphism is ind-quasi-affine. We say that a morphism of algebraic spaces $X\to Y$ is {\em ind-quasi-affine} if for any morphism $Z\to Y$ with an affine source the pullback $X\times_YZ$ is ind-quasi-affine. The following follows easily from \cite[Tags:0AP5,02X4,0418]{stacks}:

\begin{lem}\label{compqaff}
(i) Any separated locally quasi-finite morphism is ind-quasi-affine. In particular, so is the diagonal $X\to X\times X$ of an algebraic space $X$.

(ii) If $Y$ is ind-quasi-affine then so is any morphism $Y\to X$.
\end{lem}

\subsubsection{Zariski points}\label{zarpointssec}
Recall that a {\em point} of an algebraic space $X$ is an equivalence class of morphisms $\Spec(F)\to X$, where $F$ is a field, and two maps are equivalent if and only if they are dominated by a third map. A point $x$ is called {\em Zariski point} if its class contains a quasi-compact monomorphism. Recall that $X$ is called {\em decent} in \cite[Tag:03I8]{stacks} if all its points are Zariski points; e.g., quasi-separated spaces are decent by \cite[Tag:03JX]{stacks}. It follows from \cite[Tag:0BBN]{stacks} that if $x$ is a Zariski point, and $f\:\Spec(F)\to X$ is its quasi-compact monomorphic representative then (1) any representative of $x$ factors through $f$, and (2) $f$ factors through an \'etale presentation. Thus, the quasi-compact monomorphic representative of $x$ is unique up-to an isomorphism, and by abuse of language, we will not distinguish between $x$ and $f\:\Spec(F)\to X$, and will call $F$ the {\em residue field of $x$}.

\subsubsection{A criterion for being a monomorphism}

\begin{lem}\label{lem:monomor}
Assume that $h\:Y\to X$ is a representable morphism locally of finite type and $g\:X'\to X$ is any surjective morphism. Then $h$ is a monomorphism if and only if so is $h\times_XX'$.
\end{lem}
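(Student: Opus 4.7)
The plan is to use the fibral characterization of monomorphisms of finite type: a morphism $h\:Y\to X$ of finite type between algebraic spaces is a monomorphism if and only if, for every morphism $\Spec K\to X$ with $K$ a field, the fiber $Y_K:=Y\times_X\Spec K$ is either empty or isomorphic to $\Spec K$. For algebraic spaces this reduces to the classical scheme statement by passing to an \'etale presentation of $X$ and using that monomorphism is fpqc-local on the base.

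The ``only if'' direction of the lemma is immediate, since $\Delta_{h\times_XX'}=\Delta_h\times_XX'$ and the base change of an isomorphism is an isomorphism. For the converse, assume $h\times_XX'$ is a monomorphism and fix a morphism $\Spec K\to X$ with image $x\in|X|$. Using surjectivity of $g$, choose a Zariski point $\Spec\kappa(x')\to X'$ lying over $x$. Since both $K$ and $\kappa(x')$ are extensions of $\kappa(x)$, one can pick a common field extension $L$ containing both of them compatibly over $\kappa(x)$; this yields a morphism $\Spec L\to X'$ compatible with $\Spec L\to\Spec K\to X$.

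Now evaluate $Y_L:=Y\times_X\Spec L$ in two ways. On the one hand, $Y_L=(Y\times_XX')\times_{X'}\Spec L$ is either empty or $\Spec L$ by the monomorphism hypothesis on $h\times_XX'$. On the other hand, $Y_L=Y_K\times_KL$. In the empty case one concludes $Y_K=\emptyset$ by faithful flatness of $L/K$. In the case $Y_L\toisom\Spec L$, the structure morphism $Y_K\to\Spec K$ becomes an isomorphism after the faithfully flat base change $\Spec L\to\Spec K$, and hence is itself an isomorphism by fpqc descent of isomorphisms, so $Y_K=\Spec K$. Either way the fibral criterion for $h$ holds, and so $h$ is a monomorphism. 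The main technical point, and the only potential obstacle, is justifying the fibral characterization of monomorphisms in the setting of algebraic spaces; however, this reduces to the well-known scheme analogue via \'etale presentations, since the formation of the fibers $Y_K$ and the property of being a monomorphism are both compatible with such presentations.
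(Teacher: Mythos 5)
Your proposal is correct and follows essentially the same route as the paper: reduce to the scheme case by \'etale descent, use surjectivity of $g$ together with fpqc descent along field extensions to see that the fiber of $h$ over every point of $X$ is empty or a reduced point, and conclude by the fibral criterion for finite-type monomorphisms (EGA $\mathrm{IV}_4$, 17.2.6). The only cosmetic difference is that you test against arbitrary field-valued points $\Spec K\to X$ (hence need the auxiliary common extension $L$), whereas the paper tests only against the residue fields $k(x)$.
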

\begin{proof}
Monomorphisms are stable under arbitrary base changes, thus one direction is clear. By \cite[Tag:042Q]{stacks}, we may assume that $X$ and $X'$ are schemes, and hence so is $Y$. For any point $x'\in X'$ the base change $h\times_X\Spec(k(x'))$ is a monomorphism. It then follows by fpqc descent that the same is true for $h\times_X\Spec(k(x))$ for $x=g(x')$. Since $g$ is surjective, the restriction of $h$ over any point of $X$ is a monomorphism, hence $h$ is a monomorphism by \cite[$\rm IV_4$, 17.2.6]{ega}.
\end{proof}

\subsection{General pushouts}

\subsubsection{Pushout data}\label{defsec}
Let $\gtC$ be one of the following categories: affine schemes, schemes, or algebraic spaces. By a {\em pushout datum} $\calP$ in $\gtC$ we mean a diagram $Y\stackrel{f}\longleftarrow T\stackrel{g}\longrightarrow Z$, where $T,Y,Z\in\Ob(\gtC)$, and $f,g$ are {\em separated} morphisms. The colimit of $\calP$ in $\gtC$ is called the {\em pushout} of $Y$ and $Z$ with respect to $T$, and is denoted by $\coprod^\gtC\calP$ or $Y\coprod_T^\gtC Z$. By abuse of natation, we often refer to $\calP$ as triple $(T;Y,Z)$, omitting the morphisms in the notation. If $\gtC$ is our default category of algebraic spaces we omit $\gtC$ in the notation $\coprod^\gtC$.
\begin{rem}\label{subtlepushrem}
It often happens that two geometric categories $\gtC\subset\gtC'$ possess different pushouts $X=Y\coprod^\gtC_TZ$ and $X'=Y\coprod^{\gtC'}_TZ$, i.e., the natural $\gtC'$-morphism $X'\to X$ is not an isomorphism. 
A classical example of this phenomenon is a quotient stack $X'$ and its coarse moduli space $X$. Another example is the pushout of $\Spec(k[x])$ and $\Spec(k[x^{-1}])$ along $\Spec(k[x,x^{-1}])$. While in the category of $k$-schemes the pushout is the projective line, in the category of $k$-affine schemes it is just a point. We will see that Ferrand pushouts are more stable, and, in particular, an affine Ferrand pushout is the pushout in the category of all algebraic spaces.
\end{rem}

Consider a commutative diagram of pushout data $\calP$ and $\calP'$
$$
\xymatrix{
Y\ar[d]^{\phi_Y} & T\ar[l]_{f}\ar[r]^{g}\ar[d]^{\phi_T}& Z\ar[d]^{\phi_Z} &\\
Y' & T'\ar[l]_{f'}\ar[r]^{g'} & Z'
}
$$
If both squares are cartesian then $\phi\:\calP\to\calP'$ is called a {\em morphism of pushout data}. If $Y'=T'=Z'=U$ and $f'=g'=id_U$ then $\phi\:\calP\to U$ is called a {\em map} from the pushout datum $\calP$ to the algebraic space $U$.
Plainly, a composition of morphisms is a morphism, and the composition of a morphism and a map is a map.

The cartesian condition is rather restrictive. For instance, if $\calP\times\calP$ is the product defined componentwise, then the natural projections $\calP\times\calP\rightrightarrows \calP$ are usually not morphisms. However, one easily checks that the category of pushout data admits {\em fiber products}, which are nothing but componentwise fiber products. Moreover, if $\calP\to U$ is a map and $U'\to U$ is a morphism of algebraic spaces then $\calP\times_UU'\to \calP$ is a morphism, where $\calP\times_UU'$ is defined componentwise.

Unless explicitly said to the contrary, we say that a diagram of pushout data (resp. a pushout datum) possesses certain property if all its components do. In particular, if $\gtC$ is the category of algebraic spaces then we define affine pushout data, \'etale morphisms, fppf affine covers, etc., via this rule.

\begin{rem}
Since the category of pushout data admits no products one should be careful while defining equivalence relation. So, by an fppf or \'etale {\em equivalence relation} we mean two morphisms $\calP_1\rra\calP_0$ defining fppf or \'etale equivalence relations on the $T_i$, $Y_i$, and $Z_i$ components.
\end{rem}

\begin{lem}
Let $\calP_1\rra\calP_0$ be an fppf equivalence relation. Set $Y:=Y_0/Y_1$, $Z:=Z_0/Z_1$, $T:=T_0/T_1$, and $\calP:=\calP_0/\calP_1=(T;Y,Z)$. Then $Y,Z,T$ are algebraic spaces, the commutative diagram $\calP_0\to\calP$ is a morphism, and $\calP_1=\calP_0\times_\calP\calP_0$.
\end{lem}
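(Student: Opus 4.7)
The plan is to verify in order: (a) the componentwise quotients $T$, $Y$, $Z$ exist as algebraic spaces, equipped with separated maps $f\colon T\to Y$ and $g\colon T\to Z$ making $\calP=(T;Y,Z)$ a pushout datum; (b) the premorphism $\calP_0\to\calP$ has cartesian squares, hence is a morphism; and (c) $\calP_1=\calP_0\times_\calP\calP_0$.

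For (a), the three induced diagrams $T_1\rra T_0$, $Y_1\rra Y_0$, $Z_1\rra Z_0$ are fppf equivalence relations of algebraic spaces: the map to the product is componentwise a monomorphism because $\calP_1\rra\calP_0$ is an equivalence relation in the pushout-data category. Hence the quotients $T$, $Y$, $Z$ are algebraic spaces by the standard theorem on fppf equivalence relations of algebraic spaces (see \cite{stacks}). Since the two projections $\calP_1\rra\calP_0$ are morphisms (not just premorphisms) of pushout data, the upper square of the premorphism diagram commutes for each projection, so $f_0\colon T_0\to Y_0$ intertwines the equivalence relations on $T_0$ and $Y_0$ and descends uniquely to $f\colon T\to Y$ by fppf descent; the same argument produces $g\colon T\to Z$.

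The substantive step is (b); by symmetry it suffices to show $T_0=T\times_YY_0$. The universal property yields a canonical $T$-morphism $\varphi\colon T_0\to T\times_YY_0$, which I would show is an isomorphism by base-changing along the fppf cover $T_0\to T$. We have
$$T_0\times_TT_0=T_1,\qquad T_0\times_T(T\times_YY_0)=T_0\times_YY_0=T_0\times_{Y_0}(Y_0\times_YY_0)=T_0\times_{Y_0}Y_1,$$
where the first equality is the defining property of the quotient $T$ and $Y_0\times_YY_0=Y_1$ is its analogue for $Y$. The cartesianness built into the morphism $\calP_1\to\calP_0$ supplies the identification $T_1=T_0\times_{Y_0}Y_1$, and a diagram chase matches this equality with the base change of $\varphi$. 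Thus $\varphi$ becomes an isomorphism after an fppf base change, hence is itself an isomorphism.

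Separatedness and (c) then follow formally. Using (b), the base change of $\Delta_f\colon T\to T\times_YT$ along the fppf cover $Y_0\to Y$ is identified with $\Delta_{f_0}\colon T_0\to T_0\times_{Y_0}T_0$, which is a closed immersion since $f_0$ is separated; closed immersions are fppf local on the target, so $\Delta_f$ is a closed immersion, and similarly for $g$. The identity $\calP_1=\calP_0\times_\calP\calP_0$ holds componentwise by the very construction of the fppf quotients. The main obstacle is (b): the identifications $T_0=T\times_YY_0$ and $T_0=T\times_ZZ_0$ are what tie the separate componentwise quotients into a genuine pushout datum, and they rely precisely on the cartesianness built into the equivalence relation $\calP_1\rra\calP_0$; everything else reduces cleanly to descent.
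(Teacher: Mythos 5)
Your proof is correct and takes essentially the same route as the paper, whose entire argument consists of citing \cite[Corollary~10.4]{LMB} for the existence of the componentwise quotients and \cite[Proposition~I.5.8]{Knu} for the remaining assertions; your steps (b) and (c) are precisely the content of the latter citation, unwound via the cartesianness of the projections $\calP_1\rra\calP_0$ and fppf descent. Your explicit verification that $f\:T\to Y$ and $g\:T\to Z$ remain separated is a detail the paper leaves implicit.
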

\begin{proof}
By Artin's theorem \cite[Tag:04S6]{stacks} $Y$, $Z$, and $T$ are algebraic spaces. Thus, the last two assertions follow from \cite[Tag:07S3]{stacks}.
\end{proof}

\subsubsection{Quasi-coherent modules, pullback and pushforward functors}
For any pushout datum $\calP=(T;Y,Z)$ set $\calO_\calP:=(\calO_T;\calO_Y,\calO_Z)$. A {\em quasi-coherent $\calO_\calP$-module} $\calF$ is a triple of quasi-coherent modules $(\calF_T;\calF_Y,\calF_Z)$ with isomorphisms $\alpha_\calF\:f^*(\calF_Y)\to\calF_T$ and $\beta_\calF\:g^*(\calF_Z)\to\calF_T$. This agrees with the usual definition of modules over algebraic spaces if $Y=T=Z$. An $\calO_\calP$-module is {\em flat}, {\em finitely generated}, etc., if all its components are. A morphism of modules is a triple of morphism compatible with the homomorphisms $\alpha_\bullet$ and $\beta_\bullet$. This defines a category $\QCoh_\calP$.

For a map $\phi\:\calP\to U$ from a pushout datum to an algebraic space we have the pullback functor $\phi^*\:\QCoh_U\to\QCoh_{\calP}$ defined componentwise. 
\begin{lem}\label{phi*}
Let $\phi\:(T;Y,Z)\to U$ be a map from a pushout datum to an algebraic space. Then $\phi^*$ admits right adjoint $\phi_*$, which is given by the formula
$$\phi_*(\calF_T;\calF_Y,\calF_Z)=\phi_{Y*}(\calF_Y)\times_{\phi_{T*}(\calF_T)}\phi_{Z*}(\calF_Z).$$
\end{lem}
\begin{proof}
Pick $\calG\in\QCoh_U$. To give a homomorphism $\phi^*(\calG)\to\calF$ is the same as to give compatible homomorphisms from $\calG$ to $\phi_{Y*}(\calF_Y)$, $\phi_{Z*}(\calF_Z)$ and $\phi_{T*}(\calF_T)$, which is equivalent to giving a homomorphism $\calG\to\phi_{Y*}(\calF_Y)\times_{\phi_{T*}(\calF_T)}\phi_{Z*}(\calF_Z)$.
\end{proof}
\begin{rem}
A similar theory applies to the categories of quasi-coherent algebras verbatim, so we omit the details.
\end{rem}


\section{Affine Ferrand pushouts}\label{pushsec}

\subsection{Terminology}
A pushout datum $\calP=(T;Y,Z)$ is called {\em Ferrand} if $T\to Y$ is affine and $T\to Z$ is a closed immersion. In the category of schemes such pushouts were introduced and studied extensively by D. Ferrand \cite{Fer}. If the pushout $X=\coprod\calP$ exists and the morphisms $Y\to X$ and $Z\to X$ are affine then we say that $X$ is a {\em Ferrand pushout} and the Ferrand pushout datum $\calP$ is {\em effective}.

If all components of a Ferrand pushout datum $\calP$ are affine then we say that $\calP$ is an {\em affine Ferrand pushout datum} and $X=\coprod^{\aff}\calP$ is the {\em affine Ferrand pushout}. We will prove in Theorem~\ref{lem:affpushispush} that $X=\coprod\calP$, so it is, in fact, Ferrand pushout of $\calP$, but we have to distinguish the two notions until then. 

\subsection{Ferrand diagrams of rings}\label{ringssec}

\subsubsection{The definition}\label{ferdefsec}
Let $B\to K\leftarrow C$ be homomorphisms of rings such that $C\to K$ is surjective. Set $A:=B\times_KC$, and consider the commutative diagrams
\begin{equation}\label{eq:ferdiag}
\xymatrix{
K & C\ar@{->>}[l] & & & T\ar[d]\ar@{^(->}[r] & Z\ar[d]^{\Spec(\phi)}\\
B\ar[u] & A\ar@{->>}[l]\ar[u]_\phi & & & Y\ar@{^{(}->}[r] & X
}
\end{equation}
where the right diagram is the diagram of the corresponding spectra. We say that the left cartesian square is a {\em Ferrand diagram} of rings.

\subsubsection{Conductor}
The ideal $I:=\Ker(A\onto B)$ is called the {\em conductor} of the Ferrand diagram. Note that $\phi$ induces an isomorphism $I\to \Ker(C\onto K)$, and the diagram is completely determined by $\phi$ and $I$. Vice versa, for a homomorphism $\phi\:A\to C$ and an ideal $I\subset A$ such that $I\toisom\phi(I)$ is an ideal of $C$, set $B:=A/I,K:=C/I$. Then the corresponding diagram is Ferrand.

\subsubsection{Basic properties}
By a bicartesian square we mean a square diagram which is both cartesian and cocartesian.

\begin{lem}\label{firstproplem}
Let assumptions and notation be as in diagram  \eqref{eq:ferdiag}. Then,

(i) $B\otimes_AC=K$, so Ferrand diagrams are bicartesian.

(ii) If $A\to A'$ is flat then the diagram \eqref{eq:ferdiag}$\otimes_AA'$ is Ferrand.

(iii) $\Spec(\phi)$ induces an isomorphism of open subschemes $Z\setminus T=X\setminus Y$.
\end{lem}
\begin{proof}
(i) Let $I$ be the conductor. Then $B\otimes_AC=C/IC=C/I=K$.

(ii) The sequence $0\to A\to B\oplus C\to K$ is exact, hence by the flatness assumption so is $0\to A'\to B'\oplus C'\to K'$, where $\bullet':=\bullet\otimes_AA'$. Thus, $A'=B'\times_{K'}C'$, and the diagram \eqref{eq:ferdiag}$\otimes_AA'$ is Ferrand.

(iii) By (ii), $A_f=B_f\times_{K_f}C_f$ for any $f\in A$. Thus, for $f\in I$, $B_f=K_f=0$, $A_f=C_f$, and so $Z_f=X_f$. Hence $Z\setminus T=\cup_{f\in I}Z_f=\cup_{f\in I}X_f=X\setminus Y$.
\end{proof}

\begin{exam}\label{ex:afpo}
If $K=k[x^{\pm 1}]$, $B=k[x]$, $C=k[x^{\pm 1},y]$, and the map $C\to K$ sends $y$ to $0$, then $A=\{f\in k[x^{\pm 1},y]\,:\, f(x,0)\in k[x]\}$, and the conductor ideal is $I=yk[x^{\pm 1}]$. Notice that $A$ is not Noetherian even though $B,C,$ and $K$ are so.
\end{exam}

\subsection{Affine Ferrand pushouts}\label{affpushsec}

\subsubsection{Basic properties}\label{fersec}
In general, affine pushouts $\coprod^{\aff}\calP$ may contain almost no information about the components of $\calP$, cf. Remark~\ref{subtlepushrem}. However, affine Ferrand pushouts behave much better:

\begin{prop}\label{ferprop}
Let $X=Y\coprod^{\aff}_TZ$ be an affine Ferrand pushout. Then,

(i) The topological pushout $|Y|\coprod_{|T|}|Z|$ is naturally homeomorphic to $|X|$,

(ii) $T=Y\times_XZ$,

(iii) $Y\to X$ is a closed immersion, $U=Z\setminus T\to X$ is an open immersion, and $|X|=|Y|\coprod|U|$ set-theoretically.
\end{prop}
\begin{proof}
Assertion (i) follows from \cite[Theorem~5.1 and Scholie~4.3]{Fer}, and assertions (ii)-(iii) follow from Lemma~\ref{firstproplem}.
\end{proof}

\subsubsection{Liftings of semivaluations}
Assume that $T\to Y$ is an open immersion, in particular, the pushout is a composition. Then Proposition~\ref{ferprop}(i) implies that the topological space $|X|$ is glued from its open subspace $|Z|$ and closed subspace $|Y|$ along $|T|$. In particular, for any $z\in |Z|$ with a specialization $y\in |Y|$ there exists a point $t\in |T|$ with $y\preceq t\preceq z$. Thus, any continuous map from a topological space totally ordered by the specialization relation to $|X|$ for which the preimage of $|Y|$ consists of at most one point factors through $|Z|$. For example, in the situation of Example~\ref{ex:afpo}, a map $f\:\Spec(R)\to X$ from a valuation scheme factors through $Z$ if and only if $f^*x$ is invertible. If $f^*x$ is not invertible then the valuation of $f^*x$ is strictly positive, and since the valuation of $f^*(x^{-n}y)$ is non-negative, the valuation of $f^*y$ is strictly greater than the valuation of $f^*x^n$ for all $n$. Thus, $\sqrt{f^*I}=\sqrt{(f^*x^ny)_{n\in\bbZ}}\varsubsetneq (f^*x)\subseteq m_R$, and since radical ideals in valuation rings are prime we obtain that $f^{-1}(Y)$ contains at least two points. The following strengthening of this fact will be used to show that Ferrand pushouts preserve separatedness.

\begin{lem}\label{vallem}
Let assumptions and notation be as in diagram \eqref{eq:ferdiag}. Assume that $R$ is a valuation ring of non-zero height, i.e., $R$ is not a field, $S:=\Spec(R)$, $s\in S$ the closed point, and $f\:S\to X$ a morphism such that $f^{-1}(Y)=\{s\}$. Then $f$ admits a unique lifting $g\:S\to Z$, and the latter morphism satisfies $g^{-1}(T)=\{s\}$.
\end{lem}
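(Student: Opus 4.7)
The plan is to translate the lemma into ring-theoretic terms and exploit the valuation structure of $R$. The morphism $f$ corresponds to a ring homomorphism $\psi\:A\to R$, and the condition $f^{-1}(Y)=\{s\}$ translates to $V(\psi(I)R)=\{\gtm_R\}$ in $\Spec R$, i.e., $\psi(I)\subseteq\gtm_R$ and $\sqrt{\psi(I)R}=\gtm_R$. Since $R$ has nonzero height, $\gtm_R\ne 0$, so $\psi(I)\ne 0$. A direct computation in $A=B\times_KC$ shows $\Ker(\phi)\cdot I=0$; combined with the fact that $R$ is a domain and $\psi(I)\ne 0$, this forces $\psi(\Ker(\phi))=0$. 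Producing $g$ then amounts to constructing a unique ring homomorphism $\theta\:C\to R$ with $\theta\circ\phi=\psi$.

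Next I would define $\theta$ by a localization trick. For $c\in C$ and $a\in I$ with $\psi(a)\ne 0$, the product $c\phi(a)$ lies in the ideal $\phi(I)=\Ker(C\onto K)$ of $C$, hence equals $\phi(\alpha)$ for some $\alpha\in I\subseteq A$; any two such $\alpha$ differ by an element of $\Ker(\phi)$, on which $\psi$ vanishes, so $\psi(\alpha)$ is well defined. I set $\theta(c):=\psi(\alpha)/\psi(a)\in F:=\Frac R$. A straightforward calculation verifies that $\theta(c)$ does not depend on the choice of $a$, that $\theta$ is a ring homomorphism, and that $\theta\circ\phi=\psi$.

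The crucial step is to show $\theta(c)\in R$. By construction, $\theta(c)\cdot\psi(a)=\psi(\alpha)\in\psi(I)$ for every $a\in I$, so $\theta(c)\cdot J\subseteq J$ where $J:=\psi(I)R\subseteq\gtm_R$; that is, $\theta(c)\in(J:J):=\{x\in F:xJ\subseteq J\}$. I claim $(J:J)=R$. Indeed, suppose $x\in F$ satisfies $v(x)<0$ for the valuation $v$ associated to $R$. Then $x^{-1}\in\gtm_R=\sqrt J$, so $x^{-n}\in J$ for some $n\ge 1$; if moreover $x\in(J:J)$, then $x^n\in(J:J)$ as well, and hence $1=x^n\cdot x^{-n}\in x^nJ\subseteq J$, contradicting $J\subseteq\gtm_R$. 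This step is the main technical obstacle, and is where the nonzero height of $R$ enters essentially (via $\gtm_R\ne 0$ and therefore $\psi(I)\ne 0$).

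Finally, uniqueness follows because any extension $\theta'$ must satisfy $\theta'(c)\cdot\psi(a)=\psi(\alpha)$ for the same $\alpha$, and $\psi(a)$ is a nonzero element of the domain $R$, determining $\theta'(c)$ in $F$ and hence in $R$. For the last assertion, $g^{-1}(T)$ corresponds to $V(\theta(\Ker(C\onto K))R)=V(\psi(I)R)$ in $\Spec R$, which equals $\{s\}$ by hypothesis.
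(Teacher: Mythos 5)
Your proof is correct; I checked the key computations ($\Ker(\phi)\cdot I=0$ in $A=B\times_KC$, hence $\psi(\Ker\phi)=0$ once $\psi(I)\ne 0$; the well-definedness and multiplicativity of $\theta(c)=\psi(\alpha)/\psi(a)$; and the identity $(J:J)=R$), and each goes through. The route is organized differently from the paper's. The paper first replaces $R$ by the valuation ring $R\cap k(\eta)$ of the residue field at the image $\eta$ of the generic point; since $\eta$ lies in $U=Z\setminus T=X\setminus Y$ (where $A_f=C_f$ for $f\in I$), the factorization of $A\to k(\eta)$ through $C$ comes for free from the geometry, and the only thing left to check is that the image of $C$ lands in $R$. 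That containment is proved by contradiction: if $x\in C$ maps to $\ox\notin R$, one finds $h\in I$ with $\oh\ox^n\notin R$, contradicting $hx^n\in IC=I\subset A$. You instead build the lift $\theta\:C\to\Frac(R)$ directly by the conductor-localization formula, deriving the factorization from $\psi(\Ker\phi)=0$ rather than from the geometric identification of $U$, and you package the containment step as $\theta(c)\in(J:J)=R$ with $J=\psi(I)R$. The two containment arguments are the same valuation-theoretic mechanism (multiply by a conductor element to land in $A$, then use $\sqrt{\psi(I)R}=\gtm_R$), but your version is more self-contained: it avoids the reduction to $k(\eta)$ and the appeal to $Z\setminus T=X\setminus Y$, at the cost of having to verify by hand that $\theta$ is a well-defined ring homomorphism. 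Your direct uniqueness argument also replaces the paper's appeal to separatedness of $Z\to X$.
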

\begin{proof}
The uniqueness of lifting follows from the separatedness of $Z\to X$, and $g^{-1}(T)=f^{-1}(Y)=\{s\}$ because $Z\setminus T=X\setminus Y$. Thus, let us prove the existence.

Let $\eta\in X$ be the image of the generic point of $S$. Then $k(\eta)\subseteq\Frac(R)$ and $R'=R\cap k(\eta)$ is a valuation ring. Since $f$ factors through a morphism $f'\:\Spec(R')\to X$, it suffices to lift $f'$ to $Z$. Thus, after replacing $R$ with $R'$ we may assume that $R$ is a valuation ring of $k(\eta)$. By our assumptions $\eta\in Z\setminus T$, hence the homomorphism $A\to k(\eta)$ factors through the homomorphism $C\to k(\eta)$, and we should only prove that the image $C(\eta)\subset k(\eta)$ of $C$ lies in $R$.

Assume to the contrary that some $x\in C$ is mapped to $\ox=x(\eta)\in k(\eta)\setminus R$. Then $\ox^{-1}$ lies in the maximal ideal $m_R$. Let $I\subset A$ be the conductor of the diagram. Since $f^{-1}(Y)=\{s\}$, the radical of $IR$ coincides with $m_R$. In particular, there exists $n\ge 1$ such that $\ox^{-n}\in IR$, and hence $\ox=\ox^{n+1}\ox^{-n}\in CIR=IR\subset R$, which is a contradiction.
\end{proof}

Recall that a ring $A$ is Pr\"ufer if and only if all its localizations are valuation rings. So, the following is an immediate consequence of Lemma~\ref{vallem}.

\begin{cor}\label{valcor}
Let assumptions and notation be as in diagram \eqref{eq:ferdiag}. Assume that $R$ is a Pr\"ufer ring such that $S:=\Spec(R)$ has no isolated points, and $f\:S\to X$ is a morphism such that the preimage of $Y$ is contained in the set of closed points of $S$. Then $f$ admits a unique lifting to a morphism $g\:S\to Z$.
\end{cor}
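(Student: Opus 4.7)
The plan is to reduce to Lemma~\ref{vallem} one closed point at a time. Uniqueness of $g$ is immediate: $Z\to X$ is affine, hence separated, so any two liftings of $f$ agree.

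For existence I would first handle the open complement. Set $U:=S\setminus f^{-1}(Y)$; this is open since $f^{-1}(Y)$ is closed, and by Lemma~\ref{firstproplem}(iii) the restriction $f|_U$ factors through $X\setminus Y=Z\setminus T\subseteq Z$, producing a unique lift $g_U\colon U\to Z$. It remains to extend $g_U$ over each point $s\in f^{-1}(Y)$, which by assumption is a closed point of $S$.

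Given such $s$, the localisation $R_s$ is a valuation ring by the Pr\"ufer hypothesis. I claim that $R_s$ has non-zero height, i.e.\ is not a field. If it were, then $\mathfrak{m}_s$ would be simultaneously maximal and minimal in $R$; since $R$ is reduced (Pr\"ufer rings are reduced) one can then produce an element $t\notin\mathfrak{m}_s$ annihilating $\mathfrak{m}_s$, giving $D(t)=\{s\}$ and contradicting ``no isolated points''. Granting the claim, the morphism $\Spec R_s\to S\to X$ satisfies the hypotheses of Lemma~\ref{vallem}: its target set $\Spec R_s$ consists of generalisations of $s$ in $S$, the only one of which that is closed in $S$ is $s$ itself, so the preimage of $Y$ is the single closed point of $\Spec R_s$. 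Lemma~\ref{vallem} therefore supplies a unique lift $\Spec R_s\to Z$, and by separatedness of $Z\to X$ this lift agrees with $g_U$ on $\Spec R_s\cap U=\Spec R_s\setminus\{s\}$.

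Finally the local lifts are assembled into a global morphism. Since $Z\to X$ is affine, the problem is that of finding a section of the affine $S$-scheme $S\times_X Z\to S$, i.e.\ an $R$-algebra retraction of $R\to C\otimes_A R$; compatibility at every prime of $R$ (either from $g_U$ over primes in $U$, or from the $R_s$-lift over primes specialising to $s\in f^{-1}(Y)$) determines this retraction uniquely. The main obstacle in the plan is the non-zero height claim for $R_s$: this is exactly where both the Pr\"ufer and the ``no isolated points'' assumptions enter and interact, whereas the subsequent gluing is formal given uniqueness.
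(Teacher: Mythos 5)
The paper gives no argument for this corollary (it is dismissed as ``obvious''), so the comparison is against the evidently intended reduction to Lemma~\ref{vallem}; your skeleton --- split off $U=S\setminus f^{-1}(Y)$ via Lemma~\ref{firstproplem}(iii) and localize at each closed point of $f^{-1}(Y)$ --- is exactly that. But your justification of the non-zero height claim is wrong. From ``$\mathfrak{m}_s$ is maximal and minimal'' and ``$R$ reduced'' you cannot manufacture a single $t\notin\mathfrak{m}_s$ with $t\mathfrak{m}_s=0$ unless $\mathfrak{m}_s$ is finitely generated: $R_{\mathfrak{m}_s}$ being a field only says each element of $\mathfrak{m}_s$ is killed by \emph{some} element outside $\mathfrak{m}_s$. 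For $R$ the ring of locally constant $k$-valued functions on a Cantor set, every localization is a field (so $R$ is Pr\"ufer in the paper's sense), no singleton is open, yet $\Ann(\mathfrak{m}_s)=0$ for every $s$; worse, composing $A\to B\to B/\mathfrak{m}_B\to R$ in Example~\ref{fpexam} gives a morphism satisfying all the stated hypotheses that does not lift to $Z$, so the corollary is simply false under the purely topological reading of ``isolated point'' and no argument can close this gap. The hypothesis must be read as ``no point of $S$ is simultaneously closed and minimal'' (equivalently, no $R_{\mathfrak{m}}$ is a field) --- note that with the topological reading the paper's own application in Theorem~\ref{thm:pushoutofseppdissep} would also fail, since the spectrum of a height-one valuation ring has an open generic point. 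Under the correct reading, the claim you labor over is literally the hypothesis.

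The second, and larger, problem is your closing assertion that ``the subsequent gluing is formal given uniqueness.'' It is not: you have an honest section of $S\times_XZ\to S$ over the \emph{open} set $U$, but the subschemes $\Spec R_s$ are only pro-open, and a family of elements of all the $R_{\mathfrak{p}}$ compatible under generization need not come from an element of $R$ (again the Cantor-set ring shows this). Compatibility at every prime gives uniqueness of the retraction $C\otimes_AR\to R$, not its existence. Existence is where the Pr\"ufer/reducedness structure actually enters: since every minimal prime lies in $U$, the map $C\to R$ can be defined inside the total ring of fractions $Q$ of $R$, and one checks its image lands in $\bigcap_{\mathfrak{m}}R_{\mathfrak{m}}=R$ by treating separately $\mathfrak{m}\in U$ and $\mathfrak{m}\in f^{-1}(Y)$ (the latter via Lemma~\ref{vallem}). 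Relatedly, uniqueness ``by separatedness'' alone is not enough --- two liftings agree because they coincide on $U$, which is schematically dense once one knows $R$ is reduced and every point of $f^{-1}(Y)$ has a proper generization. So the difficulty sits in the gluing, not in the height claim.
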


\subsubsection{Descent of properties through $\phi_*$}\label{phisec}
Assume that $X=Y\coprod^{\aff}_TZ$ is a Ferrand affine pushout, and let $\phi\:\calP\to X$ be the natural map. 
While the pullback preserves all natural properties of modules, the situation with the pushforward is more delicate. When working on \cite{temrz} the first author learned from D. Rydh the following example, in which finite presentation is not respected by $\phi_*$.

\begin{exam}\label{fpexam}
Let $k$ be a field. Consider discrete valuation rings $B=k[x]_{(x)}$ and $C=K[y]_{(y)}$, where $K=k(x)$. In particular, $C/yC=K=\Frac(B)$. Then their composition $A$ is the preimage of $B$ in $C$, which is a valuation ring of height two. In fact, $A$ is the valuation ring of $k(x,y)$ such that $|y|\ll|x|\ll 1$, where the valuation is written multiplicatively. Note that $C=A_x$ and $B=A/I$, where $I=yC$ is the conductor of the corresponding Ferrand diagram.

Set $I_n:=x^{-n}yA$, and note that $I=\cup I_n$ is not finitely generated as an ideal of $A$ because $I_n\varsubsetneq I_{n+1}$ for any $n$. In particular, $A'=A/I=B$ is finitely generated but not finitely presented over $A$, while $A'_n=A/I_n$ are finitely presented. Note that the $B$-module $B'_n:=A'_n\otimes_AB=B$, the $C$-module $C'_n:=A'_n\otimes_AC=C/I_nC=C/I=K$, and the $K$-module $K'_n=A'_n\otimes_AK=K$ are finitely presented and independent of $n$. In particular, $B'_n\times_{K'_n}C'_n=B'_n=A'$, and we obtain the following pathologies: $\phi_*$ takes the finitely presented $\calO_\calP$-module $(K'_n;B'_n,C'_n)$ to the non-finitely presented $A$-module $A'$, and the natural homomorphism $A'_n\to\phi_*\phi^*(A'_n)=A'$ is not injective.
\end{exam}

On the positive side, we have the following list of properties respected by $\phi_*$:

\begin{prop}\label{phidescentrop}
Let $\phi\:\calP\to X$ be as in \S\ref{phisec}, and let $(\dag)$ be one of the following properties of quasi-coherent modules (resp. algebras): (i) flat, (ii) finitely generated, (iii) flat and finitely presented. Then, both $\phi_*$ and $\phi^*$ preserve $(\dag)$.
\end{prop}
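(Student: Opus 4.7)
The plan is to localize on $X$ to reduce to the case of a Ferrand diagram of rings, and then appeal directly to Ferrand's descent theorems in \cite{Fer}.

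\textit{Reduction and pullback.} Each of the three properties is local on $X$ in the fpqc topology. Choose a surjective \'etale morphism $S_0\to S$ with $S_0$ affine, and set $X_0:=X\times_SS_0$ and $\calP_0:=\calP\times_SS_0$. Since $X$ is $S$-affine, $X_0$ is affine, and $\calP_0$ is an affine Ferrand pushout datum; by Lemma~\ref{uniflem}(ii), $X_0\toisom\coprod^{S_0^\aff}\calP_0$. Both $\phi^*$ and $\phi_*$ of quasi-coherent modules commute with the resulting flat base change along $S_0\to S$ (the relevant components of $\phi$ being affine morphisms), so we reduce to the situation of a Ferrand diagram of rings $A=B\times_KC$ as in \eqref{eq:ferdiag}. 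In this setting, $\phi^*$ sends an $A$-module $M$ to $(M\otimes_AK;M\otimes_AB,M\otimes_AC)$ and preserves (i)--(iii) because extension of scalars preserves flatness, finite generation and finite presentation; the algebra case is identical.

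\textit{Pushforward.} The functor $\phi_*$ sends an $\calO_\calP$-module $(N;N_B,N_C)$ to the $A$-module $M=N_B\times_NN_C$, with the algebra structure, when applicable, inherited componentwise. The assertion that $M$ inherits properties (i)--(iii) from its factors is the content of \cite[Th\'eor\`eme~2.2]{Fer} for modules, and the algebra case reduces to the module case since $\phi_*$ is compatible with the multiplication structure.

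\textit{Main obstacle.} The principal difficulty is preservation of flatness by $\phi_*$. A direct attempt via the short exact sequence $0\to M\to N_B\oplus N_C\to N\to 0$ (which comes from the surjectivity of $N_C\to N=N_C/IN_C$) is inconclusive because one cannot control $\Tor_i^A(N_B,-)$ in general -- $N_B$ is a $B$-module but typically not $A$-flat. Ferrand's solution uses the structure of the conductor ideal $I$ together with the flatness hypothesis on $\calM$ to verify that the natural morphisms $M\otimes_AB\to N_B$, $M\otimes_AC\to N_C$ and $M\otimes_AK\to N$ are all isomorphisms; from these identifications, vanishing of $\Tor_1^A(M,-)$ on each test module follows, and hence $A$-flatness of $M$. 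The cases of finite generation and of flat finite presentation then follow from related descent arguments in \cite{Fer}, with Example~\ref{fpexam} showing that flatness cannot be dropped from hypothesis (iii).
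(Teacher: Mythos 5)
Your reduction to a Ferrand diagram of rings and your treatment of $\phi^*$ and of the module case of $\phi_*$ (by citation to \cite[Theorem~2.2]{Fer}) match the paper's proof. However, there is a genuine gap in your final sentence: the claim that ``the algebra case reduces to the module case since $\phi_*$ is compatible with the multiplication structure'' is not an argument. For property (i) the reduction is indeed immediate, since an algebra is flat exactly when its underlying module is. But ``finitely generated'' and ``finitely presented'' mean different things for algebras than for modules: an algebra of finite type need not be a finite module, so knowing that $\phi_*$ preserves finite generation of modules says nothing directly about finite generation of algebras. The paper supplies two separate arguments precisely here. For (ii), given $B'=B\otimes_AA'$ and $C'=C\otimes_AA'$ finitely generated as algebras, one chooses a finitely generated $A$-subalgebra $A''\subseteq A'$ such that $B\otimes_AA''\onto B'$ and $C\otimes_AA''\onto C'$ are surjective, deduces $\phi^*(A'/A'')=0$ by right exactness, and concludes $A''=A'$ from \cite[Theorem~2.2(ii)]{Fer}. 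For (iii), one first gets flatness and finite generation of $A'$ from (i) and (ii), then presents $A'$ as a quotient of a polynomial ring $A[T_1\. T_n]$ and shows the kernel ideal $I_A$ is finitely generated: flatness of $A'$ forces $I_A\otimes_AB=\Ker(f_B)$ and $I_A\otimes_AC=\Ker(f_C)$, which are finitely generated because $B'$ and $C'$ are finitely presented, and then one invokes the module-level statement (ii). Without some such argument your proof of the algebra cases of (ii) and (iii) is incomplete.
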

\begin{proof}
For modules, the assertion was proved by Ferrand \cite[Theorem~2.2(iv)]{Fer}] (one uses the fact that a flat finitely presented module is nothing but a projective module of finite type). Obviously, this also covers (i) for algebras.

To prove (ii) and (iii) for algebras, let $A'$ be an $A$-algebra, $B':=B\otimes_AA'$, and $C':=C\otimes_AA'$. In (ii) we should prove that if $B'$ and $C'$ are finitely generated then $A'$ is finitely generated. Obviously, there exists a finitely generated $A$-subalgebra $A''\subset A'$ such that the homomorphisms $B\otimes_AA''\onto B'$ and $C\otimes_AA''\onto C'$ are surjective. Since $\phi^*$ is right exact, this implies that $\phi^*(A'/A'')=0$, and then $A''=A'$ by \cite[Theorem~2.2(ii)]{Fer}].

Finally, assume that $B'$ and $C'$ are flat and finitely presented. Then $A'$ is $A$-flat by (i), and finitely generated over $A$ by (ii). It remains to prove that it is finitely presented over $A$. Pick a surjective $A$-homomorphism $f_A\:A''=A[T_1,\dotsc,T_n]\to A'$, and let us prove that $I_A=\Ker(f_A)$ is finitely generated. By (ii) for modules, it suffices to show that $I_B=I_A\otimes_AB$ and $I_C=I_A\otimes_AC$ are finitely generated. Since $A'$ is $A$-flat, $I_A$ is flat too. Therefore, $I_B=\Ker(f_B)$ and $I_C=\Ker(f_C)$, where $f_B\:B''\to B'$ and $f_C\:C''\to C'$ are the base changes of $f_A$. Thus, $I_B$ and $I_C$ are finitely generated modules since $B'$ and $C'$ are finitely presented.
\end{proof}

\subsubsection{The adjunctions}

\begin{prop}\label{adjunctprop}
Let $\phi\:\calP\to X$ be as in \S\ref{phisec}. Then,

(i) The adjunction $\phi^*\phi_*(\calM)\to\calM$ is an isomorphism for any quasi-coherent $\calO_\calP$-module (resp. $\calO_\calP$-algebra) $\calM$.

(ii) If $\calF$ is a flat quasi-coherent $\calO_X$-module (resp. $\calO_X$-algebra) then the adjunction $\calF\to\phi_*\phi^*(\calF)$ is an isomorphism.
\end{prop}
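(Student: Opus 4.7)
The plan is to reduce to the absolute affine case by flat descent, then to verify the two adjunction isomorphisms via explicit computations in terms of Ferrand's diagrams of rings from \S\ref{ferdefsec}, appealing to Ferrand's \cite[Theorem~2.2]{Fer} for the subtlest piece. The reduction proceeds exactly as in the proof of Proposition~\ref{ferprop}: pick an affine presentation $S_0\to S$ and set $S_1:=S_0\times_SS_0$; by Lemma~\ref{uniflem} the formation of $X=\coprod^{S^\aff}\calP$ is compatible with flat base change, as are $\phi^*$ and $\phi_*$, and whether a morphism of quasi-coherent modules is an isomorphism is fpqc-local. Hence the claims reduce to the absolute affine case, applied first over $S_0$ and then, after a further affine presentation, over the separated space $S_1$.

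In the absolute affine case we have $A=B\times_KC$ with $C\onto K$ surjective and conductor $I\subset A$ mapping isomorphically to $\ker(C\onto K)$. A quasi-coherent $\calO_\calP$-module $\calM=(N;M,P)$ carries isomorphisms $\alpha\:M\otimes_BK\toisom N$ and $\beta\:P\otimes_CK\toisom N$. Setting $A':=M\times_NP$, one has $\phi_*(\calM)=A'$ and $\phi^*(A')=(A'\otimes_AK;\,A'\otimes_AB,\,A'\otimes_AC)$, and the adjunction $\phi^*\phi_*\calM\to\calM$ is the triple of maps induced by the projections. For the $Y$-component $A'\otimes_AB=A'/IA'\to M$: the projection $A'\onto M$ is surjective because both $M\onto N$ and $P\onto N$ are (which follow from $C\onto K$ being surjective together with $\alpha,\beta$ being iso), and a short computation---using $IM=0$ and the ability to lift any element of $P$ to a pair in $A'$ via the surjectivity of $M\onto N$---identifies its kernel with $IA'$. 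The $T$-component then follows by further tensoring with $K$ (using $K=B\otimes_AK$ and $\alpha$). The $Z$-component $A'\otimes_AC\toisom P$ is the delicate part, since $A\to C$ need not be surjective, and here we invoke \cite[Theorem~2.2]{Fer} directly.

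For part~(ii), the sequence $0\to A\to B\oplus C\to K\to 0$ from Lemma~\ref{firstproplem}(i) is exact; tensoring with the flat $A$-module $\calF$ preserves exactness and yields
\[
0\to \calF\to (\calF\otimes_AB)\oplus(\calF\otimes_AC)\to \calF\otimes_AK\to 0,
\]
which is precisely the statement $\calF\toisom\phi_*\phi^*(\calF)$. In both (i) and (ii), the extension to algebras is automatic: both $\phi^*$ and $\phi_*$ respect the algebra structure (the fibered product of rings is a ring, and tensor products of algebras are algebras), the adjunction maps are algebra homomorphisms, and so the module-level isomorphisms upgrade to algebra isomorphisms.

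The main obstacle is the $Z$-component isomorphism $A'\otimes_AC\toisom P$ in~(i) without any flatness hypothesis on $\calM$. Since $A\to C$ need not be surjective, $A'\otimes_AC$ cannot be identified with a quotient of $A'$, and the argument must exploit the precise form of Ferrand's diagram---notably the identification of the conductor $I\subset A$ with $\ker(C\onto K)$ as $A$-modules---to compare $A'\otimes_AC$ with $P$. This is the delicate content of Ferrand's analysis that we invoke as a black box.
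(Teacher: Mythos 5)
Your overall route coincides with the paper's: reduce to the absolute affine case by descent on $S$ exactly as in Proposition~\ref{ferprop}, reduce algebras to modules, prove (i) via Ferrand's \cite[Theorem~2.2(i)]{Fer}, and prove (ii) by tensoring the exact sequence $0\to A\to B\oplus C\to K\to 0$ with the flat module. Part (ii) and the reductions are fine. The problem is in the hand computation you insert for the $Y$-component of (i): you assert that $M\to N=M\otimes_BK$ is surjective. This is false in general, since only $C\to K$ is assumed surjective, not $B\to K$; already in Example~\ref{fpexam} one has $B=k[x]_{(x)}$, $K=k(x)$, and for $M=B$ the map $M\to M\otimes_BK=K$ is not onto. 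The surjectivity of $A'=M\times_NP\to M$ survives (it only needs $P\onto N$, which does hold because $C\onto K$), but your identification of $\ker(A'\to M)$ with $IA'$ does not: since $A'\to P$ is not surjective, an element $(0,ip)$ with $i\in I$, $p\in P$ cannot in general be lifted the naive way to an element of $I\cdot A'$. One has $\ker(A'\to M)=0\oplus IP$ while $IA'=0\oplus I\,v^{-1}(u(M))$ (with $u\:M\to N$, $v\:P\to N$ the natural maps), and showing $IP\subseteq I\,v^{-1}(u(M))$ genuinely requires an extra argument: writing $v(p)=\sum\lambda_j u(m_j)$ with $\lambda_j\in K$, lifting $\lambda_j$ to $c_j\in C$ and $u(m_j)$ to $q_j\in v^{-1}(u(M))$, so that $p-\sum c_jq_j\in\ker v$, and then using that $I$ is an ideal of $C$ to conclude $ip=\sum(ic_j)q_j+i\,r$ lies in $I\,v^{-1}(u(M))$. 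This is essentially the content of Ferrand's proof, not a "short computation" from surjectivity of $M\to N$.

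The gap is harmless to the final result, because \cite[Theorem~2.2(i)]{Fer} --- which you already invoke for the $Z$-component --- in fact asserts that the adjunction $\phi^*\phi_*(\calM)\to\calM$ is an isomorphism on all three components simultaneously. The cleanest repair is therefore to cite it for the whole of (i), as the paper does, and drop the partial hand computation (or else carry out the lifting argument sketched above correctly).
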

\begin{proof}
Obviously, it suffices to prove the proposition for modules, and in this case (i) is proved in \cite[Theorem~2.2(i)]{Fer}. As for (ii), one should check that any flat $A$-module $M$ satisfies $M=(M\otimes_AB)\times_{(M\otimes_AK)}(M\otimes_AC)$, and this is done by tensoring the exact sequence $0\to A\to B\oplus C\to K$ with $M$.
\end{proof}

Propositions~\ref{phidescentrop} and \ref{adjunctprop} immediately imply the following result.

\begin{cor}\label{fercor}
The functors $\phi_*$ and $\phi^*$ establish essentially inverse equivalences between the categories of flat modules (resp. algebras) over $\calO_X$ and $\calO_\calP$. Moreover, these functors establish equivalences of the full subcategories consisting of flat and finitely generated modules (resp. algebras), and of flat and finitely presented modules (resp. algebras).
\end{cor}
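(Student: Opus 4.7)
The plan is to assemble Corollary \ref{fercor} directly from the two preceding results, so almost no new work is required — the task is to verify that the adjunction unit and counit land where we need them and that both functors preserve the three properties in question.

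First I would check the equivalence on flat objects. By Proposition \ref{phidescentrop}(i), the functors $\phi^*$ and $\phi_*$ each preserve flatness for modules and for algebras, so they restrict to functors between the full subcategory of flat quasi-coherent $\calO_X$-modules (resp.\ algebras) and the full subcategory of flat quasi-coherent $\calO_\calP$-modules (resp.\ algebras). The counit $\phi^*\phi_*(\calM)\to\calM$ is an isomorphism for \emph{every} quasi-coherent $\calM$ by Proposition \ref{adjunctprop}(i), so in particular for flat $\calM$; and the unit $\calF\to\phi_*\phi^*(\calF)$ is an isomorphism for flat $\calF$ by Proposition \ref{adjunctprop}(ii). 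Since the standard adjunction $(\phi^*,\phi_*)$ of Section \ref{defsec} then has invertible unit and counit on these subcategories, $\phi^*$ and $\phi_*$ are essentially inverse equivalences there.

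Next I would promote the equivalence to the two finer subcategories. For flat and finitely generated objects, combine Proposition \ref{phidescentrop}(i) with Proposition \ref{phidescentrop}(ii): both $\phi^*$ and $\phi_*$ preserve flatness and preserve finite generation, so each restricts to the full subcategory of flat finitely generated objects on either side. Restricting the equivalence from the previous paragraph to these full subcategories yields the desired equivalence. The same argument using Proposition \ref{phidescentrop}(iii) in place of (ii) gives the equivalence on flat and finitely presented objects.

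There is no genuine obstacle here; the only care needed is to confirm that $\phi_*$ is the functor in the adjunction Lemma preceding Section \ref{affpushsec} (which applies since $\calP'=X$ is an algebraic space) and that the properties in Proposition \ref{phidescentrop} are stated symmetrically for both functors, so that restricting to full subcategories preserves essential surjectivity of $\phi^*$ and $\phi_*$. Once this bookkeeping is in place, the corollary is a formal consequence of Propositions \ref{phidescentrop} and \ref{adjunctprop}.
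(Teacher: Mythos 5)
Your proposal is correct and matches the paper exactly: the paper states that Propositions~\ref{phidescentrop} and \ref{adjunctprop} "immediately imply" the corollary, and your write-up simply makes explicit the same two ingredients (preservation of flatness, finite generation, and flat finite presentation by both functors, plus the unit and counit being isomorphisms on flat objects). No gaps.
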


\begin{rem}
One cannot remove the flatness assumption in Corollary~\ref{fercor}. For example, in the situation described in Example~\ref{fpexam}, the finitely presented $A$-module $A'_n$ is not contained in the essential image of $\phi_*$. Indeed, if $A'_n\toisom\phi_*(M)$ for a $\calP$-module $M$ then $\phi^*(A'_n)\toisom\phi^*\phi_*(M)=M$ by Proposition~\ref{adjunctprop}(i), and so $\phi_*\phi^*(A'_n)\toisom\phi_*(M)\otisom A'_n$. But we saw in Example~\ref{fpexam} that the cyclic modules $\phi_*\phi^*(A'_n)=A'$ and $A'_n$ are not isomorphic.
\end{rem}

\subsection{Equivalence of categories}
Assume that $X=\coprod^{\aff}\calP$ and $\phi\:\calP\to X$ is the natural map. Then by $\phi^{-1}$ we denote the pullback functors from the category of $X$-spaces to the category of pushout data over $\calP$, i.e., $\phi^{-1}(X')=\calP\times_XX'$.

\begin{lem}\label{affproperties}
If $\calP$ is an affine Ferrand pushout datum, $X=\coprod^{\aff}\calP$, and $\gtC$ and $\gtD$ the categories of affine flat schemes (resp. pushout data) over $X$ (resp. $\calP$) then,

(i) $\phi^{-1}\:\gtC\to\gtD$ and $\coprod^{\aff}\:\gtD\to\gtC$ are essentially inverse equivalences.

(ii) Both $\phi^{-1}$ and $\coprod^\aff$ preserve the following properties of morphisms: (1) surjective, (2) \'etale, (3) flat, (4) finite type, (5) flat and finitely presented.
\end{lem}
\begin{proof}
(i) follows from Corollary~\ref{fercor} by passing to spectra since $\phi^{-1}$ corresponds to $\phi^*$ and $\coprod^{\aff}$ corresponds to $\phi_*$ by Lemma~\ref{phi*}.

(ii) Clearly, $\phi^{-1}$ respects all five properties, so let us prove that they are also respected by $\coprod^{\aff}$. Cases (3)--(5) follow from Corollary~\ref{fercor}. Recall that a finitely presented flat morphism is \'etale if and only if so are its fibers. Thus, case (2) follows from (5), since the morphism $h\:Y\coprod Z\to X$ is surjective by Proposition~\ref{ferprop}. In the same fashion, case (1) follows from the surjectivity of $h$.
\end{proof}

\section{General Ferrand pushouts}\label{genpushsec}
Now let us study general Ferrand pushouts. At one place we will have to use a result on existence of \'etale affine coverings of pushouts, whose proof is postponed  until Section~\ref{affpressec} for expositional reasons.

\subsection{Zariski globalization of $\coprod^\aff$}
Our first goal is to globalize the functor $\coprod^\aff$ with respect to Zariski topology.

\begin{lem}\label{Zarlem0}
There exists a unique extension of $\coprod^\aff$ to a functor $\coprod^\Sch$ that associates a scheme to a Ferrand pushout datum possessing an open affine covering, and takes open coverings to open coverings.
\end{lem}
\begin{proof}
Let $\calP$ be a Ferrand pushout datum with an open affine covering $\{\calP_i\}_{i\in I}$. We claim that each intersection $\calP_{ij}$ possesses an open affine covering $\{\calP_{ijk}\}_{k\in K_{ij}}$. Indeed, $\calP_{ij}$ is open in $\calP_i$, hence by Proposition~\ref{ferprop}(i) it is the pullback of an open subscheme $X_{ij}$ of $X_i=\coprod^\aff\calP_i$. Take an open affine covering $\{X_{ijk}\}_k$ of $X_{ij}$ and define $\calP_{ijk}$ to be its pullback to $\calP_{ij}$.

At this stage, uniqueness of $\coprod^\Sch$ is clear: for each $\calP_{ij}$ fix an open affine covering $\{\calP_{ijk}\}_{k\in K_{ij}}$ of $\calP_{ij}$ and glue $X=\coprod^\Sch\calP$ form the affine schemes $X_i$ along the open subschemes $X_{ij}=\cup_{k\in K_{ij}}\coprod^\aff\calP_{ijk}$. To prove existence one should check independence of the construction of choices. In fact, it suffices to check that one can replace the affine covering $\{\calP_i\}_i$ with its refinement, which is clear.
\end{proof}

Ferrand proved that $\coprod^\aff\calP$ is the pushout of $\calP$ in the category of schemes, which implies that so is $\coprod^\Sch\calP$; thereby justifying the notation. We shall not use this, and instead we we will prove directly that $\coprod^\Sch\calP$ is the pushout in the category of all algebraic spaces. Let us start with a necessary auxiliary result.

\begin{lem}\label{Zarlem}
Assume that $\calP$ is a Ferrand pushout datum that admits an open affine covering and $X=\coprod^\Sch\calP$. Let $\gtC$ be the category of flat schemes over $X$ (with arbitrary morphisms between them), and $\gtD$ the category of flat Ferrand pushout data over $\calP$ that possess an open affine covering. Then,

(i) The natural map $\phi\:\calP\to X$ is affine, $T=Y\times_XZ$, $Y\to X$ is a closed immersion, $U=Z\setminus T\to X$ is an open immersion, $|Y|\coprod_T|Z|=|X|$, and $|X|=|Y|\coprod|U|$ set-theoretically.

(ii) The functors $\phi^{-1}\:\gtC\to\gtD$ and $\coprod^\Sch\:\gtD\to\gtC$ are essentially inverse equivalences. In particular, they preserve products, and $\gtD$ admits finite products.

(iii) Both $\phi^{-1}\:\gtC\to\gtD$ and $\coprod^\Sch\:\gtD\to\gtC$ preserve the following properties of morphisms: (1) surjective, (2) \'etale, (3) flat, (4) locally of finite type, (5) flat and locally of finite presentation.
\end{lem}
\begin{proof}
(i) Let $\{\calP_i\}_i$ be an open affine covering of $\calP$. Set $X_i:=\coprod^\aff\calP_i$. Then, by the construction of $\coprod^\Sch$, the map $\calP\to X$ is glued on the base from the affine maps $\calP_i\to X_i$. The other claims now follow from Proposition~\ref{ferprop}.

(ii) We shall prove that $\coprod^\Sch(\calP\times_XX')=X'$ for an $X$-flat scheme $X'$ and $\calP'=(\coprod^\Sch\calP')\times_XX'$ for an appropriate $\calP$-pushout datum. Both functors are compatible with localizations, hence it suffices to consider the case when $X,X',\calP$ and $\calP'$ are affine, but this case is covered by Lemma~\ref{affproperties}(i).

(iii) Properties (2)--(5) are Zariski local on the target and on the source, hence the assertion follows from Lemma~\ref{affproperties}(ii). Finally, the claim about surjectivity follows from the surjectivity of $h\:Y\coprod Z\to X$.
\end{proof}

\subsection{The schematic case}\label{affinesec}
In the proof of the following theorem, given an algebraic space $U$ and a pushout datum $\calP$, denote by $h_U(\calP)$ the set of maps $\calP\to U$. Thus, a morphism $\calP\to X$ represents the pushout of $\calP$ if and only if the natural map $\psi\:h_U(X)\to h_U(\calP)$ is bijective for any $U\in Ob(\s)$.

\begin{theor}\label{lem:affpushispush}
Let $\calP$ be a Ferrand pushout datum that admits an open affine covering and $X=\coprod^\Sch\calP$. Then $\calP$ is effective and $X=\coprod\calP$.
\end{theor}
\begin{proof}
It suffices to show that $X=\coprod\calP$ since the morphism $\calP\to X$ is affine by Lemma~\ref{Zarlem}(i). First, let us reduce to the case when $\calP$ is affine. If $\{\calP_i\}_{i\in I}$ is an open affine covering of $\calP$ then we showed in the proof of Lemma~\ref{Zarlem0} that each $\calP_{ij}$ possesses an open affine covering $\{\calP_{ijk}\}_{k\in K_{ij}}$ and $X$ is glued from $X_i=\coprod^\aff\calP_i$ along the unions of $X_{ijk}=\coprod^\aff\calP_{ijk}$. In other words, $\calP=\colim_\calD\calP_*$ and $X=\colim_\calD X_*$, where $\calD$ denotes the diagram of arrows $(ijk)\to l$ with $i,j\in I$, $k\in K_{ij}$ and $l\in\{i,j\}$. Since colimits commute, if the theorem holds for affine pushouts then $\coprod\calP=\coprod\colim_\calD\calP_*=\colim_\calD\coprod\calP_*=\colim_D X_*=X.$

Assume now, that $\calP$ is affine, and hence $X=\coprod^\aff\calP$. we shall show that the natural map $\psi\:h_U(X)\to h_U(\calP)$ is bijective for any algebraic space $U$. Since $X,Y,Z,T$ are affine, $h_U(X)=\lim_W h_W(X)$ and $h_U(\calP)=\lim_W h_W(\calP)$, where the limits are taken over all open quasi-compact subsets $W\subseteq U$. Thus, we may assume that $U$ is quasi-compact. Fix an \'etale covering $U_0\to U$ with an affine source.

Injectivity of $\psi$: Pick $h_1,h_2\in h_U(X)$ and assume that $\psi(h_1)=\psi(h_2)$. Set $\calP_0':=\calP\times_UU_0$ and $X_{0i}':=X\times_{h_i,U}U_0$. Since $X'_{0i}$ are schemes and $X_{01}'\times_X\calP\simeq\calP_0'\simeq X_{02}'\times_X\calP$, it follows from Lemma~\ref{Zarlem}(ii) that $X_{01}'\simeq X_{02}'$, which we denote simply by $X_0'$. Since $X'_0$ is quasi-compact there exists a surjective \'etale morphism $X_0\to X_0'$ with an affine source. Set $\calP_0:=\calP\times_XX_0=\calP_0'\times_{X_0'}X_0$, then $X_0=\coprod^\aff\calP_0$ by Lemma~\ref{Zarlem}(ii). Consider the commutative diagram
$$
\xymatrix{
\calP_0\ar@{->>}[rr]\ar[d]&& \calP_0'\ar@{->>}[rr]\ar[rdd]\ar[d]&& \calP\ar[d]\ar[rdd]^(.35){\psi(h_1)=\psi(h_2)}\\
X_0\ar@{->>}[rr] && X_0'\ar@<0.5ex>[rd]\ar@<-0.5ex>[rd]\ar@{->>}[rr]|!{[u];[dr]}\hole && X\ar@<0.5ex>[rd]_(.4)h_1\ar@<-0.5ex>[rd]^(.4){h_2}\\
&&& U_0\ar@{->>}[rr]&& U
}
$$
It is enough to prove that the base change morphisms $X'_0\rightrightarrows U_0$ coincide, or, equivalently, that the two composed morphisms $X_0\to X'_0\rra U_0$ coincide. But, the composed map $\calP_0\to\calP'_0\to U_0$ factors uniquely through $X_0=\coprod^\aff\calP_0$ since $U_0$ is affine, and hence $h_1=h_2$.

Surjectivity of $\psi$: Let $\alp\in h_U(\calP)$ be arbitrary, and set $\calP_0':=\calP\times_UU_0$. Then $Z'_0$ is an ind-quasi-affine scheme and hence $\calP'_0$ possesses an \'etale affine covering $\calP_0\to\calP'_0$ by Corollary~\ref{prescor1}. Set $\calP_1=\calP_0\times_\calP\calP_0$ and $X_i:=\coprod^\aff\calP_i$ for $i=0,1$, then $\calP_0\to U_0$ factors through a morphism $h_0\:X_0\to U_0$. Note that $X_1\rra X_0$ is an \'etale equivalence relation with quotient $X$ because $X_0\to X$ is an \'etale covering by Lemma~\ref{Zarlem}(iii) and $X_1=X_0\times_XX_0$ by Lemma~\ref{Zarlem}(ii). Consider the commutative diagram

$$
\xymatrix{
\calP_1\ar@<0.5ex>[rr]\ar@<-0.5ex>[rr]\ar[d]&& \calP_0\ar@{->>}[rr]^\pi\ar[d]\ar[rrdd]&& \calP\ar[d]_{\phi}\ar[rdd]^(.35)\alp\\
X_1\ar@<0.5ex>[rr]\ar@<-0.5ex>[rr]&& X_0\ar@{->>}[rr]|!{[u];[drr]}\hole\ar[rrd]_{h_0}&& X\ar@{-->}[rd]_(.4)h\\
&&&& U_0\ar@{->>}[r]& U
}
$$

The compositions of $\calP_1\to X_1$ with the two composed morphisms $X_1\rightrightarrows X_0\to U_0\to U$ coincide. Thus, the two morphisms $X_1\rightrightarrows U$ coincide by the injectivity of $\psi_1\:h_U(X_1)\to h_U(\calP_1)$. Hence the morphism $X_0\to U$ induces a morphism $h\:X\to U$ since $X=X_0/X_1$. It remains to verify that $\psi(h)=\alpha$. By the construction $\alpha\circ\pi=h\circ\phi\circ\pi$, where $\pi\:\calP_0\to\calP$ is the \'etale covering. Thus $\alpha=h\circ\phi=\psi(h)$, and we are done.
\end{proof}

\subsubsection{Ferrand pushouts of schemes}
For completeness, we compare our results to Ferrand's. This will not be used, so uninterested reader can skip to Section~\ref{pusheqsec}. By Theorem~\ref{lem:affpushispush}, Ferrand pushout of an affine scheme is affine. On the other hand, it is easy to give examples of Ferrand pushouts of schemes which are not schemes.

\begin{exam}\label{ex:nonschematicpushout}
Assume that $Z$ is a scheme, $T=\{t_1,t_2\}\subset Z$ a closed subscheme admitting no open affine neighborhood, and $f\:T\to Y$ a morphism such that $f(t_1)$ and $f(t_2)$ have a common specialization $y\in Y$. In Theorem~\ref{effth}(ii), we will establish criteria that guarantee the existence of the Ferrand pushout $X=Y\coprod_TZ$ of this kind even for compositions ($f$ is an open immersion) and pinchings ($f$ is finite). For any neighborhood $U\subseteq X$ of the image of $y$, its preimage in $Z$ contains $T$. In particular, $U$ is not affine, and hence $X$ is not a scheme.
\end{exam}

Consider the following condition ($\dag$) on a Ferrand pushout datum $\calP=(T;Y,Z)$: (a) $X$, $Y$ and $Z$ are schemes, (b) the pushout $X$ of $\calP$ in the category of locally ringed spaces is a scheme, (c) the morphism $Z\to X$ is affine, and (d) the morphism $Y\to X$ is a closed immersion. In \cite[Theorem~7.1]{Fer}, Ferrand found a necessary and sufficient condition for ($\dag$) to hold. One can also relate ($\dag$) to general Ferrand pushouts as follows:

\begin{theor}\label{schemth}
Let $\calP=(T;Y,Z)$ be a Ferrand pushout datum. Then the following conditions are equivalent:

(i) $\calP$ satisfies Ferrand's condition {\em($\dag$)}.

(ii) $\calP$ is effective in the category of algebraic spaces and $\coprod\calP$ is a scheme.

(iii) $\calP$ possesses an open affine covering.
\end{theor}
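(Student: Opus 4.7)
The plan is to establish the two equivalences (ii)$\Leftrightarrow$(iii) and (i)$\Leftrightarrow$(iii), both resting on the gluing machinery of Theorem~\ref{thm:gluingthm}. The easy implications (ii)$\Rightarrow$(iii) and (i)$\Rightarrow$(iii) will just pull back an affine open cover of the scheme $X$; the deeper implication (iii)$\Rightarrow$(ii) will use the equivalence of \'etale sites to push the cover in the other direction; and (iii)$\Rightarrow$(i) will require identifying the algebraic-space pushout with the pushout in locally ringed spaces.

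For (iii)$\Rightarrow$(ii), I start with a Zariski affine presentation $\calP'=\coprod_i\calP'_i\to\calP$ where each $\calP'_i\to\calP$ is an open immersion and each $\calP'_i$ is affine. Since open immersions are \'etale, this is an affine \'etale presentation and Theorem~\ref{thm:suffcondforeffpushout} yields effectiveness; set $X:=\coprod\calP$. By Theorem~\ref{thm:gluingthm}(i)--(ii), each open immersion $\calP'_i\to\calP$ corresponds under $\coprod$ to an open immersion $X'_i:=\coprod\calP'_i\to X$, and $X'_i$ is affine by Theorem~\ref{lem:affpushispush}. Surjectivity of $\coprod_iX'_i\to X$, which also follows from Theorem~\ref{thm:gluingthm}(ii), makes $\{X'_i\}$ into an affine open cover of $X$, so $X$ is a scheme. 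Conversely, for (ii)$\Rightarrow$(iii), effectiveness gives $Y\to X$ and $Z\to X$ affine, and $T=Y\times_XZ$ by Corollary~\ref{cor:ferprop}(ii) is then affine over $X$ as well; choosing any affine open cover $\{X_i\}$ of the scheme $X$ produces an affine Zariski presentation $\coprod_i(\calP\times_XX_i)\to\calP$.

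For (i)$\Rightarrow$(iii), Ferrand's condition ($\dag$) makes $Y$ and $Z$ affine over the scheme $X$ (a closed immersion being affine), and Ferrand's identity $T=Y\times_XZ$ makes $T$ affine over $X$ too, so $\calP$ is $X$-affine; pulling back an affine open cover of $X$ yields the desired Zariski presentation. The reverse (iii)$\Rightarrow$(i) is the one requiring real work. Reusing the construction from (iii)$\Rightarrow$(ii), I obtain $X=\coprod\calP$ as a scheme covered by the affines $X'_i=\coprod^\aff\calP'_i$, and schematicness of $\calP$ is immediate because $Y$, $Z$, and $T$ are covered by the corresponding affine components of the $\calP'_i$. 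Conditions ($\dag$)(b) and ($\dag$)(c) follow directly from effectiveness and Corollary~\ref{cor:ferprop}(iii). The main obstacle is ($\dag$)(a): that this $X$ is also the pushout of $\calP$ in the category of locally ringed spaces. For this I invoke Ferrand's original affine statement \cite[Thm.~5.1]{Fer}, by which each $X'_i$ is the LRS pushout of $\calP'_i$. Given any locally ringed space $W$ with compatible maps $Y\to W$, $Z\to W$, the restrictions to $Y'_i$ and $Z'_i$ factor uniquely through $X'_i\to W$ by this local universal property; these local factorizations agree on overlaps by uniqueness checked on a further affine refinement, and therefore glue to a morphism $X\to W$, realizing $X$ as the LRS pushout and completing ($\dag$).
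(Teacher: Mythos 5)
Your proof is correct and follows essentially the same route as the paper: both rest on pulling back an affine cover along the affine premorphism $\calP\to X$ for the easy implications, and on Theorem~\ref{thm:suffcondforeffpushout}, Theorem~\ref{lem:affpushispush}, Theorem~\ref{thm:gluingthm}, Corollary~\ref{cor:ferprop}, and Ferrand's affine result \cite[Theorem~5.1]{Fer} for the substantive ones. The only differences are cosmetic: you prove the four implications (i)$\Leftrightarrow$(iii) and (ii)$\Leftrightarrow$(iii) where the paper runs the cycle (i)$\Rightarrow$(iii)$\Rightarrow$(ii)$\Rightarrow$(i), and you spell out by hand the Zariski-local gluing of the locally-ringed-space universal property that the paper compresses into the single sentence ``the latter can be checked Zariski locally on $X$.''
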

\begin{proof}
Assume (i) holds, and let $X$ be the schematic pushout of $\calP$. Since the morphism $\calP\to X$ is affine, any affine open covering of $X$ induces an open affine covering of $\calP$, and we obtain (iii). Theorem~\ref{lem:affpushispush} provides the implication (iii)$\implies$(ii). Finally, if (ii) holds then an open affine covering of $X=\coprod\calP$ induces such a covering of $\calP$, hence $X=\coprod^\Sch\calP$ by Theorem~\ref{lem:affpushispush}. Working locally it suffices to check $(\dag)$ when $\calP$ is affine, but this is covered by \cite[Theorem~5.1]{Fer}.
\end{proof}

\subsection{Pushout of equivalence relations}\label{pusheqsec}
General Ferrand pushouts are ``glued" from the schematic ones by use of \'etale equivalence relations. The following lemma plays a central role in our construction.

\begin{lem}\label{affeqrel}
Let $p_{1,2}\:\calP_1\rra\calP_0$ be an fppf equivalence relation of Ferrand push\-out data. Assume that $\calP_i$ admit open affine coverings, and set $X_i:=\coprod^\Sch\calP_i$. Then the induced morphisms $q_{1,2}\:X_1\rra X_0$ form an fppf equivalence relation.
\end{lem}
\begin{proof}
We have an fppf groupoid of pushout data $(p_1,p_2,m,i,\delta)$, where $$p_{1,2}:\calP_1\rra\calP_0, m\:\calP_2=\calP_1\times_{p_1,\calP_0,p_2}\calP_1\to\calP_1,i\:\calP_1\toisom\calP_1,\delta\:\calP_0\to\calP_1$$ satisfy all usual compatibilities, such as $p_i\circ\delta=\Id$, etc.; see \cite[\S35.11, (Tag:0231)]{stacks}. Since $\coprod^\Sch$ respects flat fiber products by Lemma~\ref{Zarlem}(ii), it takes this groupoid to an fppf groupoid of schemes $(q_1,q_2,\coprod^\Sch m,\coprod^\Sch i,\coprod^\Sch\delta)$. It remains to check that the latter groupoid is, in fact, an equivalence relation, that is, the diagonal $h\:X_1\to X_0\times X_0$ is a monomorphism.

Set $U_i:=Z_i\setminus T_i$. Then $U_i\to X_i$ is an open immersion and $|X_i|=|Y_i|\coprod|U_i|$ by Lemma~\ref{Zarlem}(i). Notice that $h$ is locally of finite type. Indeed it is the composition of $X_1\to X_1\times X_1$ and $X_1\times X_1\to X_0\times X_0$, where the first morphism is locally of finite type, and the second morphism is locally of finite presentation. Thus, by Lemma~\ref{lem:monomor}, it is sufficient to show that the base change of $h$ with respect to the surjective morphism $\left(Y_0\times Y_0\right)\coprod \left(U_0\times U_0\right)\coprod \left(U_0\times Y_0\right)\coprod \left(Y_0\times U_0\right)\to X_0\times X_0$ is a monomorphism.

The base changes of $h$ to $U_0\times Y_0$ and $Y_0\times U_0$ are monomorphisms, in fact empty morphisms, because their sources are supported on $U_1\cap Y_1$. By definition, $Y_1\rra Y_0$ and $Z_1\rra Z_0$ are equivalence relations, hence $h_Y\:Y_1\to Y_0\times Y_0$ and $h_Z\:Z_1\to Z_0\times Z_0$ are monomorphisms. Note that $$h'_Y\:Y_1\times_{X_1}Y_1=X_1\times_{(X_0\times X_0)}(Y_0\times Y_0)\to Y_0\times Y_0$$ is a base change of $h$. Since $Y_1\to X_1$ is a monomorphism (even a closed immersion), the diagonal $Y_1\to Y_1\times_{X_1}Y_1$ is an isomorphism, and hence $h_Y=h'_Y$ is the base change of $h$. Although a similar claim fails for $Z_i$, it does hold for $U_i$, and hence $h_U\:U_1\to U_0\times U_0$ is a base change of $h$. On the other hand, $h_U$ is also the base change of $h_Z$, hence a monomorphism, which completes the proof.
\end{proof}

\subsection{General pushouts}\label{generalsec}
Now, we can attack the general case.

\begin{theor}\label{genpushoutth}
Let $\calP=(T;Y,Z)$ be a Ferrand pushout datum. Then,

(i) The following conditions are equivalent: (a) $\calP$ is effective, (b) there exists an \'etale presentation $\calP=\calP_0/\calP_1$ such that both $\calP_0$ and $\calP_1$ admit open affine coverings, (c) same as (b) with an fppf presentation $\calP=\calP_0/\calP_1$.

(ii) In the situation of (c) set $X:=\coprod\calP$ and $X_i:=\coprod\calP_i$. Then $X_1\rra X_0$ is an fppf equivalence relation, $X=X_0/X_1$, and $\calP_i=\calP\times_XX_i$.
\end{theor}
\begin{proof}
(a)$\implies$(b): Pick an \'etale presentation $X_0/X_1$ of $X:=\coprod\calP$, and set $\calP_i:=\calP\times_XX_i$. Then $\calP_0/\calP_1$ is an \'etale presentation of $\calP$, and open affine coverings of $X_i$ pull back to open affine coverings of $\calP_i$ since the maps $\calP_i\to X_i$ are affine.

It remains to show that (c) implies (a) and the assertion of (ii). Let $\calP_0/\calP_1$ be an fppf presentation of $\calP$ such that $\calP_i$ possess open affine coverings. Set $X_i:=\coprod^\Sch\calP_i$ and recall that $X_i=\coprod\calP_i$ by Theorem~\ref{lem:affpushispush}. Since colimits commute, $$\coprod\calP=\coprod\Coeq(\calP_1\rra\calP_0)=\Coeq(X_1\rra X_0).$$ Furthermore, $X_1\rra X_0$ is an fppf equivalence relation by Lemma~\ref{affeqrel}, hence $X=X_0/X_1$ is the pushout of $\calP$. Finally, since by Lemma~\ref{Zarlem}(ii) $\calP_1=\calP_0\times_{X_0}X_1$ (with respect to either projection $\calP_1\to\calP_0$), it follows that $\calP_0=\calP\times_XX_0$ by flat descent and then clearly $\calP_1=\calP\times_XX_1$. In particular, the map $\calP\to X$ is affine by flat descent since $\calP_0\to X_0$ is affine, thereby proving that $\calP$ is effective.
\end{proof}


\begin{theor}\label{cor:ferprop}
Let $\calP=(T;Y,Z)$ be an effective Ferrand pushout datum, $X=\coprod\calP$, and $\phi\:\calP\to X$ the natural map. Then,

(i) The topological pushout $|Y|\coprod_{|T|}|Z|$ is naturally homeomorphic to $|X|$.

(ii) $T=Y\times_XZ$.

(iii) $Y\to X$ is a closed immersion, $U=Z\setminus T\to X$ is an open immersion, and $|X|=|Y|\coprod|U|$ set-theoretically.
\end{theor}
\begin{proof}
By Theorem~\ref{genpushoutth} there exists an \'etale presentation $\calP=\calP_0/\calP_1$ such that $\calP_i$ admit open affine coverings, and $X=X_0/X_1$ and $\calP_0=\calP\times_XX_0$, where $X_i=\coprod\calP_i$. Using flat descent this reduces all claims to the case of the pushout $\coprod\calP_0$. It remains to recall that $\coprod\calP_0=\coprod^\Sch\calP_0$ and use Lemma~\ref{Zarlem}(i).
\end{proof}

\section{Affine presentations}\label{affpressec}
Existence of $\tau$-affine coverings was important in our construction of pushouts. In particular, existence of an \'etale affine covering of $\calP$ is necessary for $\calP$ to be effective, and we will prove later that it is also sufficient. Unfortunately, we were unable to verify the existence of such coverings in general, but at least we shall establish some non-trivial criteria in this section.

\subsection{Relation to lifting problems}
A $\tau$-affine covering $\{\calP_i\to\calP\}$ is nothing but a pair of $\tau$-affine coverings $\{Y_i\to Y\}$ and $\{Z_i\to Z\}$ that agree over $T$. In many cases, the requirement that a covering of $T$ is the pull-back of a covering of $Y$ is very restrictive, e.g., this is the case if $Y$ is a point. Therefore our strategy for constructing $\{\calP_i\to\calP\}$ is always to start with a fine enough $\tau$-affine covering $\{Y_i\to Y\}$ and to lift its base change $\{T_i\to T\}$ to a $\tau$-affine covering of $Z$.

\subsubsection{Pinchings}
Pinchings are the main case where one can seriously profit from playing also with affine coverings of $Y$. This was used by J. Koll\'ar in \cite[Section~42]{Kollar} to give an elementary proof that any pinching datum possesses an \'etale affine covering (the noetherian assumption in loc.cit. is not essential and can be removed easily). A conceptual way to say that there are many \'etale coverings of $T$ coming from $Y$ is as follows.

\begin{lem}\label{pinchlem}
Assume that $f\:T\to Y$ is a finite morphism of algebraic spaces. Then \'etale coverings of the form $\{Y_i\times_YT\to T\}$, where $\{Y_i\to Y\}$ is an \'etale covering, are cofinal among all \'etale coverings of $T$.
\end{lem}
\begin{proof}
If $Y'\to Y$ is a surjective \'etale morphism, then it is sufficient to prove the lemma for the base change $f'\:T'\to Y'$ of $f$, since \'etale coverings of $T$ that factor through $T'$ form a cofinal family. Thus, we may assume that $Y$ and $T$ are schemes.

Fix an \'etale morphism $g\:U\to T$, and let $y\in g(U)$ be any point. It is sufficient to find an \'etale neighborhood $W\to Y$ of $y$ such that $W\times_YT$ factors through $U$. Let $\oY_y$ be the strict henselization of the local scheme $Y_y$. By \cite[$\rm IV_4$, 18.8.10]{ega}, $\oT=\oY_y\times_YT$ is a disjoint union of strict henselizations of $T$ at closed points, and hence $\oT\to T$ factors through $U\to T$. It remains to note that $\oY_y$ is the limit of $Y$-\'etale schemes $W_j$, hence $\oT$ is the limit of $T$-\'etale schemes $U_j=W_j\times_YT$ and by \cite[$\rm IV_3$, 8.14.2]{ega}, the $T$-morphism $\oT\to U$ factors through some $U_j$.
\end{proof}

\subsubsection{The general case}
For general Ferrand pushout data, we have only the following trivial result.

\begin{lem}\label{preslem1}
Let $\calP=(T;Y,Z)$ be a Ferrand pushout datum. If any \'etale morphism $T'\to T$ with an affine source lifts to an \'etale morphism $Z'\to Z$ with an affine source, in the sense that $T'=Z'\times_ZT$, then $\calP$ admits an \'etale affine covering.
\end{lem}
\begin{proof}
Pick an \'etale affine covering $\{Y_i\to Y\}_{i\in I}$. Then its pullback $\{T_i\to T\}$ is an \'etale affine covering that can be lifted to an \'etale family $\{Z_i\to Z\}$ such that $Z_i$ are affine schemes. Since $T\into Z$ is a closed immersion, its complement is open, and we can pick an \'etale affine covering $\{Z_j\to Z\setminus T\}_{j\in J}$. Then $\{Z_k\to Z\}_{k\in I\coprod J}$ is an \'etale affine covering of $Z$. For any $j\in J$ set $Y_j:=\emptyset$, $T_j:=\emptyset$, and for $k\in I\coprod J$ set $\calP_k:=(T_k;Y_k,Z_k)$. Then $\{\calP_k\to\calP\}$ is the required covering.
\end{proof}

\subsection{The affine lifting problem}\label{affliftsec}
Assume that $T\into Z$ is a closed immersion of algebraic spaces. By a {\em lifting} of an \'etale algebraic $T$-space $T'$ to $Z$ we mean any \'etale algebraic $Z$-space $Z'$ with $T'=Z'\times_ZT$. Lemma~\ref{preslem1} provides a motivation to raise the following affine lifting question:

\begin{question}
(AL) Assume that $T\into Z$ is a closed immersion of algebraic spaces. Given an \'etale morphism $T'\to T$ with an affine source does there exist a lifting $Z'\to Z$ with an affine source.
\end{question}

In fact, this problem is equivalent to the combination of two particular cases, a lifting question and a henselization question:

\begin{question}
Assume that $T\into Z$ is a closed immersion of algebraic spaces.

(L) If $T'$ is affine and $T'\to T$ is \'etale then does it admit a lifting to $Z$?


(H) If $T$ is affine, can one factor $T\to Z$ through an affine $Z'$ \'etale over $Z$?

\end{question}

\begin{rem}
(i) Question (H) is, closely related to the following fundamental open question about henselian schemes, cf. Conjecture B in \cite[Remark~1.23(ii)]{GS}: {\em Does there exist a non-affine henselian scheme with affine closed fiber?}

(ii) We will prove below that the affine lifting question has affirmative answer in the ind-quasi-affine case and in the case of a discrete $T$, but we do not know what happens with either (L) or (H) for more general spaces $T$ and $Z$.
\end{rem}

\begin{lem}\label{liftlem}
(AL) has affirmative answer if $Z$ is decent and $|T|$ is discrete.
\end{lem}
\begin{proof}
We shall find a lifting $Z'\to Z$ for an \'etale morphism $g\:T'\to T$ with an affine source. Clearly, we may replace $Z$ with a neighborhood $g(T')$. Thus, since $g(T')$ is quasi-compact, we may assume that so is $Z$, and hence $|T|$ is finite. Now, it is clear that it suffices to solve the lifting problem for each connected component of $T$, hence we may assume in the sequel that $|T|$ is a point.

By \cite[Tag:0ABT]{stacks}, $T$ is decent, and hence a scheme by \cite[Tag:047Z]{stacks}. Let $z$ be the reduction of $T$, then $z\to Z$ is a closed Zariski point. By \cite[Tag:0BBP]{stacks}, there exists an elementary affine \'etale neighborhood $f\:(U,u)\to (Z,z)$ of $z$, i.e., $U$ is affine, $u=f^{-1}(z)$, and $k(z)=k(u)$. Since the diagonal component $U\into U\times_ZU$ is the only component of $U\times_ZU$ containing a preimage of $z$, it follows easily that any morphism $h\:S\to Z$ with $h(S)=z$ factors through $U$. In particular, we obtain a factorization $T\to U\to Z$. Since $T'$ is discrete, we can use the local description of \'etale morphisms \cite[$\rm IV_4$, 18.4.6]{ega}, to lift the \'etale morphism $T'\to T$ to an affine \'etale morphism $Z'\to U$. Then the composition $Z'\to U\to Z$ is as needed.
\end{proof}

\begin{theor}\label{liftth}
(AL) has affirmative answer if $Z$ is an ind-quasi-affine scheme.
\end{theor}
\begin{proof}
If $Z$ is affine then the result is known, see \cite[(Tag:04D1)]{stacks}. We shall mention that our original argument in this case was rather involved and made a use of Elkik's lifting theorem. We are grateful to J. de Jong who pointed out to us that the assertion in this case is elementary.

By quasi-compactness, if $T'$ is affine then its image is contained in a quasi-affine open subscheme of $Z$. Thus, we may assume that $Z$ is quasi-affine, and hence an open subscheme of the affine hull $\oZ$. Let $\oT$ be the schematic closure of $T$ in $\oZ$; we aware the reader that the morphism from the affine hull of $T$ to $\oT$ does not have to be an open immersion. By the affine case, the \'etale morphism $T'\to T\to\oT$ lifts to an \'etale morphism $\og\:\oZ'\to\oZ$ with an affine source. The closed set $V=\oZ'\setminus\og^{-1}(Z)$ is disjoint from $T'$, and the latter is quasi-compact. Hence there exists a basic open affine $Z'\subseteq\oZ'$ containing $T'$, which is disjoint from $V$. Thus, the \'etale morphism $Z'\to\oZ$ factors through $Z$, and $g\:Z'\to Z$ is the required lifting of $f$.
\end{proof}

\subsection{Existence of \'etale affine coverings}
Using the lifting results we have proved above, existence of \'etale affine coverings can be established in the following cases.

\begin{theor}\label{cor:prescor}
A Ferrand pushout datum $\calP=(T;Y,Z)$ possesses an \'etale affine covering $\{\calP_i\to\calP\}$ if one of the following conditions is satisfied:

(i) $Z$ is an ind-quasi-affine scheme,

(ii) $\calP$ is a pinching datum, i.e., $f\:T\to Y$ is finite.

(iii) $Z$ is decent and $|T|$ is discrete.
\end{theor}
\begin{proof}
In view of Lemma~\ref{preslem1}, we see that (i) follows from Theorem~\ref{liftth} and (iii) follows from Lemma~\ref{liftlem}. In the case of a pinching datum, choose an \'etale affine covering $\{Z'_j\to Z\}$, and use Lemma~\ref{pinchlem} to find an \'etale affine covering $\{Y_i\to Y\}$ such that its pullback $\{T_i\to T\}$ factors through the pullback $\{T'_j\to T\}$ of $\{Z'_j\to Z\}$. Then $T_i\to T'_j$ can be lifted to $Z_i\to Z'_j$ by Theorem~\ref{liftth}. Thus, by setting $\calP_i:=(T_i;Y_i,Z_i)$ we obtain the desired \'etale affine covering of $\calP$.
\end{proof}

\begin{rem}
For shortness, we used Theorem~\ref{liftth} to construct \'etale affine coverings of pinching data. This is not necessary and can be bypassed by a simple local argument that only uses Lemma~\ref{preslem1}, see \cite[Section~42]{Kollar}.
\end{rem}

\begin{cor}\label{prescor1}
Let $f\:\calP\to\tilcalP$ be a morphism of Ferrand pushout data with an ind-quasi-affine $f_Z\:Z\to\tilZ$. If $\tilcalP$ admits an \'etale (resp. flat) affine covering then so does $\calP$.
\end{cor}
\begin{proof}
Let $\{\tilcalP_i\to\tilcalP\}$ be an \'etale (resp. flat) affine covering of $\tilcalP$. Set $\calP_i:=\calP\times_\tilcalP\tilcalP_i$. Then $\{\calP_i\to\calP\}$ is an \'etale (resp. flat) covering. Since $f_Z$ is ind-quasi-affine, so are its base changes $Z_i\to \tilZ_i$, and hence also $Z_i$. By Theorem~\ref{cor:prescor}(i), all $\calP_i$ admit \'etale affine coverings $\{\calP_{ij}\to\calP_i\}_j$. Hence $\{\calP_{ij}\to\calP\}_{ij}$ is the desired covering.
\end{proof}

\section{Properties of Ferrand pushouts}\label{lastsec}
In this section we study properties of spaces and morphisms preserved by the Ferrand pushout functor.

\subsection{Absolute properties}
We start with properties of spaces.

\begin{theor}\label{septh}
Let $\calP$ be an effective Ferrand pushout datum, and set $X:=\coprod\calP$. Let $(\dag)$ be any of the following properties: (1) affine, (2) possesses an open affine covering, (3) separated, (4) quasi-compact, (5) quasi-separated. Then $\calP$ satisfies $(\dag)$ if and only if so does $X$.
\end{theor}
\begin{proof}
If $X$ satisfies $(\dag)$ then so does $\calP$ since $\calP\to X$ is affine. Now, the assertion of the theorem for properties (1) and (2) follows from Theorem~\ref{lem:affpushispush}, and for property (4) the assertion is clear since $Y\coprod Z\to X$ is surjective by Theorem~\ref{cor:ferprop}.

(5) Assume that $\calP$ is quasi-separated, and let $X_i,\calP_i$ be as in Theorem~\ref{genpushoutth}. In particular, $X=X_1/X_0$, $\calP_i=\calP\times_XX_i$, and $W_i=W\times_XX_i$, where $W:=Y\coprod Z$ and $W_i:=Y_i\coprod Z_i$. We shall show that the diagonal $h\:X_1\to X_0\times X_0$ is quasi-compact. The morphism $W\to X$ is affine, hence quasi-compact, and it is surjective by Theorem~\ref{cor:ferprop}. Thus, it suffices to show the quasi-compactness of the base change $h\times_XW$. Since limits are preserved under base changes, $h\times_XW$ is the morphism $W_1=W_0\times_WW_0\to W_0\times W_0$. And the latter is quasi-compact since $W$ is quasi-separated.

(3) Assume that $\calP=(T;Y,Z)$ is separated. Then $X$ is quasi-separated by (5). Consider a valuation ring $R$ with fraction field $K$, and let $i\:\eta=\Spec(K)\to X$ be a morphism. By the valuative criterion of separatedness \cite[Tag:03KV]{stacks}, it is sufficient to show that if $f,g\:S=\Spec(R)\to X$ extend $i$ then $f=g$. Recall that $|X|=|Y|\coprod|U|$, where $U=Z\setminus T$, and $Y\to X$ is a closed immersion by Theorem~\ref{cor:ferprop}(iii). If $i(\eta)\in Y$ then the morphisms $f,g$ factor through $Y$ since $Y\to X$ is a closed immersion, and hence $f=g$ by the separatedness of $Y$. So, we may assume that $i(\eta)\in U$.

Since $f^{-1}(Y)$ and $g^{-1}(Y)$ are closed subschemes of $S$, one of them contains the other (we use here the fact that the ideals of $R$ are linearly ordered with respect to inclusion). Without loss of generality we assume that $f^{-1}(Y)$ is the larger subscheme and denote its generic point by $\veps$. Note that $\veps\neq\eta$, hence the localization $S_\veps$ is a valuation ring of non-zero height. We claim that $f(\veps)\in T$ and $g(\veps)\in T$. Pick an \'etale affine covering $\{\calP_i\to\calP\}$, and set $\calP':=\coprod_i\calP_i$, $X':=\coprod^\Sch\calP'$. Then $X'=\coprod\calP'$ by Theorem~\ref{lem:affpushispush}, and $\calP'=\calP\times_XX'$ by Theorem~\ref{genpushoutth}(ii). Set $S':=S_\veps\times_{f,X}X'$, and $S'':=S_\veps\times_{g,X}X'$. Both $S'$ and $S''$ are disjoint unions of affine \'etale schemes over $S_\veps$, hence disjoint unions of spectra of Pr\"ufer rings. By our construction, the preimages of $Y'$ in both $S'$ and $S''$ are contained in the preimage of $\veps$. Hence the morphisms $S'\to X'$ and $S''\to X'$ factor uniquely through $Z'$ by Corollary~\ref{valcor}. In particular, $f(\veps)\in T$ and $g(\veps)\in T$.

We proved that $f$ and $g$ take $S_\veps$ to $Z$, hence their restrictions onto $S_\veps$ coincide by the separatedness of $Z$. Let $W$ be the closed subscheme of $S$ with generic point $\veps$. Since $f$ and $g$ take $W$ to the separated subspace $Y\into X$ and coincide on the generic point, it follows that $f|_W=g|_W$. It remains to note that $S$ is the composition of $S_\veps$ and $W$ along $\veps$. So, $W\coprod_{\veps}S_\veps=S$ by Theorem~\ref{lem:affpushispush}, and hence $f=g$.
\end{proof}

\subsection{Criterion of effectivity}
Theorem~\ref{genpushoutth}(i) provides a criterion of effectivity of Ferrand pushout data, but it is not so convenient. Our next aim is to strengthen it by removing the assumption on $\calP_1$.

\begin{theor}\label{effth}
Let $\calP$ be a Ferrand pushout datum, then

(i) $\calP$ is effective if and only if it admits an \'etale affine covering $\{\calW_i\to\calP\}$. Moreover, if $\calP_0=\coprod_i\calW_i$ and $\calP_1=\calP_0\times_\calP\calP_0$ then $\calP_1$ possesses an open affine covering and hence the assertion of Theorem~\ref{genpushoutth}(ii) holds.

(ii) $\calP$ is effective in either of the following cases: (a) $Z$ is decent and $|T|$ is discrete, (b) $T\to Y$ is finite, (c) $Z$ is ind-quasi-affine, (d) there exists a morphism $\calP\to\calP'$ such that $\calP'$ is effective, and $Z\to Z'$ is ind-quasi-affine.
\end{theor}
\begin{proof}
Assertion (ii) follows from (i), Theorem~\ref{cor:prescor}, and Corollary~\ref{prescor1}.

(i) If $\calP$ is effective then the assertion is clear since $\calP\to \coprod\calP$ is affine. Let now $\{\calW_i\to\calP\}$ be an \'etale affine covering, $\calP_0=\coprod_i\calW_i$, and $\calP_1=\calP_0\times_\calP\calP_0$. By Theorem~\ref{genpushoutth}, it suffices to prove that $\calP_1$ possesses an open affine covering. To do so, we shall show that $\calP_1$ is effective, and $X_1:=\coprod\calP_1$ is a scheme, since then pulling back an open affine covering from $X_1$ will do the trick.

We start with effectivity. Since $\calP_0\to\calP$ is ind-quasi-affine, so is its base change $\calP_1\to\calP_0$, and hence so is $Z_1$. Thus, $\calP_1$ admits an \'etale affine covering $\{\calU_j\to\calP_1\}$ by Corollary~\ref{prescor1}. Set $\calQ_0:=\cup_j\calU_j$, and $\calQ_1:=\calQ_0\times_{\calP_1}\calQ_0$. Since $\calP_1$ is separated, both $\calQ_i$ admit open affine coverings, and hence are effective. Set $Y_i:=\coprod^\Sch\calQ_i$. Then by Theorems~\ref{lem:affpushispush} and \ref{genpushoutth}(ii), $\calP_1$ is effective, $X_1=Y_0/Y_1$, and $\calQ_i=\calP_1\times_{X_1}Y_i$.

Let us now show that $X_1$ is a scheme. Set $X_0:=\coprod^\Sch\calP_0$, and consider the morphisms $Y_i\to X_0$ and $X_1\to X_0$ induced by the first projection $p_1\:\calP_1\to\calP_0$. By Lemma~\ref{Zarlem}(ii), $\calQ_i=\calP_0\times_{X_0}Y_i$ for $i=0,1$, and it follows by flat descent that $\calP_1=\calP_0\times_{X_0}X_1$. In addition, the morphisms $Y_i\to X_0$ are \'etale by Lemma~\ref{Zarlem}(iii), hence so is $X_1\to X_0$. By Theorem~\ref{septh}, the space $X_1$ is separated, and hence a scheme by \cite[Tag:082J]{stacks}.
\end{proof}

\begin{rem}
(i) Our proof of the fact that $\calP_1$ admits an open affine covering is very indirect and rather involved. We construct an \'etale affine covering, feed it into Theorem~\ref{genpushoutth}, and then apply Zariski's main theorem for algebraic spaces to show that the pushout is, in fact, a scheme. It is an interesting question whether one can show directly (and hopefully simpler) that $\calP_1$ possesses an open affine covering, and whether one can extend the theorem to fppf affine coverings.

(ii) We do not know if this criterion can be made more explicit. We do not even know if there exist non-effective data. Note that Ferrand describes in \cite[Theorem~7.1]{Fer} such a criterion in the category of schemes, which essentially reduces to the existence of an open affine covering. In particular, one obtains a source of examples which are non-effective in the category of schemes. However, many of these examples are effective in the category of algebraic spaces, e.g., Example~\ref{ex:nonschematicpushout}, and it is not clear whether one can use this to construct a Ferrand pushout datum, which is not effective in the algebraic spaces.
\end{rem}

\subsection{Descent of properties through Ferrand pushouts}

\subsubsection{Equivalence of categories}
Lemmas~\ref{affproperties} and \ref{Zarlem} extend to the case of general Ferrand pushouts. This is one of the main results on Ferrand pushouts, so we provide a more complete list of properties they respect. A morphism of Ferrand pushout data $\psi\:\calP'\to\calP$ is called a {\em pro-open immersion} if it is isomorphic to the filtered projective limit of a family of open immersions (at least {\em a priori}, this condition is stronger than the componentwise condition).

\begin{theor}\label{equivcat}
Assume that $\calP$ is an effective Ferrand pushout datum, $X=\coprod\calP$ and $\phi\:\calP\to X$ is the natural map. Let $\gtC$ and $\gtD$ be the categories of flat algebraic spaces over $X$ and of effective flat Ferrand pushout data over $\calP$, respectively. Then,

(i) The functors $\phi^{-1}\:\gtC\to\gtD$ and $\coprod\:\gtD\to\gtC$ are essentially inverse equivalences.

(ii) The functors $\phi^{-1}\:\gtC\to\gtD$ and $\coprod\:\gtD\to\gtC$ preserve the following properties of morphisms: (1) surjective, (2) quasi-compact, (3) open immersion, (4) \'etale, (5) smooth, (6) flat, (7) locally of finite type, (8) flat and locally of finite presentation, (9) finite type, (10) flat and of finite presentation, (11) pro-open immersion. In particular, $\phi^{-1}$ and $\coprod$ respect Zariski, \'etale, fppf and fpqc topologies on $\gtC$ and $\gtD$.
\end{theor}
\begin{proof}
(i) Let us prove that if $X'$ is in $\gtC$ and $\calP'=\calP\times_XX'$ then $\coprod\calP'=X'$. Assume first that $X$ is a scheme. Choose an \'etale presentation $X'=X'_0/X'_1$ and set $\calP'_i:=\calP'\times_{X'}X'_i$, obtaining the following diagram with cartesian squares:
$$
\xymatrix{
\calP'_1\ar@<0.5ex>[r]\ar@<-0.5ex>[r]\ar[d]\ar@{}[dr]|{(1)}& \calP'_0\ar[r]\ar[d]\ar@{}[dr]|{(2)}& \calP'\ar[r]\ar[d]\ar@{}[dr]|{(3)}& \calP\ar[d] & & \calP'_1\ar@<0.5ex>[r]\ar@<-0.5ex>[r]\ar[d]\ar@{}[dr]|{(1)}& \calP'_0\ar[r]\ar[d]\ar@{}[dr]|{(23)}& \calP\ar[d]\\
X'_1\ar@<0.5ex>[r]\ar@<-0.5ex>[r]& X'_0\ar[r]& X'\ar[r]& X& & X'_1\ar@<0.5ex>[r]\ar@<-0.5ex>[r]& X'_0\ar[r]& X.
}
$$
Applying Lemma~\ref{Zarlem}(ii) to the right diagram we obtain that $X'_i=\coprod\calP'_i$ and hence $\coprod\calP'=\coprod(\calP'_0/\calP'_1)=X'_0/X'_1=X'$ by Theorem~\ref{genpushoutth}(ii).

In general, find a presentation $X=X_0/X_1$, and set $X'_i:=X'\times_XX_i$, $\calP_i:=\calP\times_XX_i$ and $\calP'_i:=\calP'\times_XX_i$. Then $\coprod\calP'_i=X'_i$ by the above case and it follows by a simple diagram chase and compatibility of colimits that $\coprod\calP'=X'$.

Vice versa, let us prove that if $\calP'$ is in $\calD$ and $X'=\coprod\calP'$ then $\calP'=\calP\times_XX'$. We will distinguish two special cases: (a) $\calP$ has an open affine covering, (b) $\calP'$ has an open affine covering. First, let us reduce the general case to case (b). Pick an \'etale affine covering $\{\calW_i\to\calP'\}$, and set $\calP'_0:=\coprod_i\calW_i$. Then $\calP'_1:=\calP'_0\times_{\calP'}\calP'_0$ admits an open affine covering by Theorem~\ref{effth}(i). By Theorem~\ref{genpushoutth}(ii), setting $X'_i:=\coprod\calP'_i$, we obtain a commutative diagram as above for which $X'=X'_0/X'_1$ and squares (1), (2) are cartesian. If the assertion is true in case (b) then (23) is also cartesian, and hence (3) is cartesian by flat descent of fiber products. Furthermore, the same argument proves the assertion in case (a) unconditionally because in the latter case (23) is cartesian by Lemma~\ref{Zarlem}(ii).

Now, let us establish case (b). By Theorem~\ref{effth}(i), there exists an \'etale presentation $\calP=\calP_0/\calP_1$, where $\calP_i$ possess open affine coverings. Then $\calP'_i:=\calP'\times_{\calP}\calP_i$ are effective by Theorem~\ref{effth}(ii)(d). Set $X_i:=\coprod\calP_i$ and $X'_i:=\coprod\calP'_i$. By case (a), $\calP_i\times_{X_i}X'_i=\calP'_i$, and by Lemma~\ref{Zarlem}(ii), $X'_i\times_{X'}\calP'=\calP'_i$. The assertion now follows by descent of fiber products.

(ii) Properties (6)--(8) are \'etale local on the target and source, hence in these cases the assertion follows by descent from Lemma~\ref{Zarlem}(iii). The assertion for properties (1), (2) is clear since the morphism $h\:Y\coprod Z\to X$ is surjective and quasi-compact, and hence a morphism of $X$-schemes is surjective (resp. quasi-compact) if and only if so is its base change with respect to $h$. A morphism $f$ satisfies (3), (4) or (5) if and only if $f$ is flat, locally of finite presentation, and its fibers satisfy the same property. Therefore, the case of (3), (4) and (5) follows from (8) and the surjectivity of $h$. Property (9) (resp. (10)) is the combination of (2) and (7) (resp. (2) and (8)). Finally, if $\calP'=\lim_i\calP_i$ for a family of open immersions $\calP_i\into\calP$ then it is also the limit in the category $\gtD$, and hence $X':=\coprod\calP'$ is the limit in $\gtC$ of the open subspaces $X_i:=\coprod\calP_i$ of $X$. Clearly, $X'=\lim_iX_i$ in the category of algebraic spaces, and hence $X'\to X$ is a pro-open immersion.
\end{proof}

\begin{cor}\label{equivcatcor}
If $\calP$ is an effective Ferrand pushout datum, and $X=\coprod\calP$, then the categories of flat $\calO_X$-modules and flat $\calO_\calP$-modules are naturally equivalent. Furthermore, the equivalence preserves the finite presentation property.
\end{cor}
\begin{proof}
Pick an \'etale affine covering $\{\calU_i\to\calP\}$, and set $\calP_0:=\coprod_i\calU_i$. Then by Theorem~\ref{effth}(i), $\calP_1:=\calP_0\times_\calP\calP_0$ possesses an open affine covering, and hence $\coprod\calP_i=\coprod^\Sch\calP_i$ by Theorem~\ref{lem:affpushispush}. Thus, the result follows by flat descent, since $X=X_0/X_1$ and $\calP=\calP_0/\calP_1$ by Theorem~\ref{genpushoutth}, and the assertion of the corollary holds true for $(\calP_i, X_i)$ by Corollary~\ref{fercor}.
\end{proof}

\subsubsection{The morphism $Z\to X$}
Our next aim is to compare properties of the morphisms $T\to Y$ and $Z\to X$ for a Ferrand pushout $X=\coprod\calP$.

\begin{theor}\label{ferdesth}
Let $X=\coprod\calP$ be a Ferrand pushout, and $(\dag)$ one of the following properties: open immersion, pro-open immersion, schematically dominant, finite, quasi-finite, finite type. Then $Z\to X$ satisfies $(\dag)$ if and only if so does $T\to Y$.
\end{theor}
\begin{proof}
Only the inverse implication needs a proof since $T\to Y$ is a base change of $Z\to X$. If $T\to Y$ is a (pro-)open immersion then $T=T\times_YT$ and hence $h\:\calP'=(T;T,Z)\to\calP$ is a morphism of pushout data. Since $h$ is a (pro-)open immersion, $Z=\coprod\calP'\to X$ is a (pro-)open immersion by Theorem~\ref{equivcat}(ii). The remaining four properties can be checked \'etale locally on $X$, so we may assume that $\calP$ and $X$ are affine by Theorems~\ref{genpushoutth} and \ref{lem:affpushispush}. The case of schematic dominance follows from \cite[Proposition~5.6(2)]{Fer} and the remaining three cases follow from \cite[Proposition~5.6(3)]{Fer}.
\end{proof}

\begin{rem}
(i) In the case of pro-open immersions, Theorem~\ref{ferdesth} actually asserts that if $X=Y\coprod_TZ$ is a composition then $Z\to X$ is a pro-open immersion. This justifies the terminology, since $X$ is ``composed'' from a closed subspace $Y$ and a pro-open subspace $Z$.

(ii) The assertion of Theorem~\ref{ferdesth} fails for $(\dag)$ being \'etale or flat even for pinchings. For example, if $Z$ is a smooth curve over a field $k$, $Y=\Spec(k)$, $T=\Spec(k\times k)\subset Z$ consists of two $k$-points, and $T\to Y$ is the projection, then $X$ is a nodal curve. So, $T\to Y$ is split \'etale, but $Z\to X$ is not even flat.
\end{rem}

\subsection{$S$-properties}
Note that even a composition of varieties over a field can be non-noetherian. In particular, the property of being of finite type over $S$ is not respected by Ferrand pushouts. Here is a list of few $S$-properties that are respected.

\begin{theor}\label{Sproperties}
Let $\calP$ be an effective Ferrand pushout datum over an algebraic space $S$, and set $X:=\coprod\calP$. Let $(\dag)$ be any of the following properties relative to $S$: (1) affine, (2) separated, (3) quasi-compact, (4) quasi-separated. Then $\calP$ satisfies $(\dag)$ if and only if so does $X$.
\end{theor}
\begin{proof}
Only the direct implication requires a proof. All properties are \'etale-local on $S$, and  if $S'\to S$ is an \'etale morphism, $X'=X\times_SS'$, and $\calP'=\calP\times_SS'=\calP\times_XX'$, then $X'=\coprod\calP'$ by Theorem~\ref{equivcat}(i). Therefore, we may assume that $S$ is affine, and the result follows from Theorem~\ref{septh}.
\end{proof}

\bibliographystyle{amsalpha}
\bibliography{nagata}

\end{document}